\begin{document}

\theoremstyle{plain}
\newtheorem{thm}{Theorem}[section]
\newtheorem{lem}[thm]{Lemma}
\newtheorem{definition}[thm]{Definition}
\newtheorem{prop}[thm]{Proposition}
\newtheorem{cor}[thm]{Corollary}
\newtheorem{rem}[thm]{Remark}
\newtheorem{ex}[thm]{Example}
\long\def\alert#1{\smallskip{\hskip\parindent\vrule%
\vbox{\advance\hsize-2\parindent\hrule\smallskip\parindent.4\parindent%
\narrower\noindent#1\smallskip\hrule}\vrule\hfill}\smallskip}
\def\ff{\frak}
\def\Spec{\mbox{\rm Spec}}
\def\type{\mbox{ type}}
\def\Hom{\mbox{ Hom}}
\def\rank{\mbox{ rank}}
\def\Ext{\mbox{ Ext}}
\def\Ker{\mbox{ Ker}}
\def\Max{\mbox{\rm Max}}
\def\Cont{\mbox{\rm Cont}}
\def\Con{\mbox{\rm Con}}
\def\End{\mbox{\rm End}}
\def\l{\langle\:}
\def\r{\:\rangle}
\def\Rad{\mbox{\rm Rad}}
\def\Zar{\mbox{\rm Zar}}
\def\Supp{\mbox{\rm Supp}}
\def\Rep{\mbox{\rm Rep}}
\def\cal{\mathcal}
\title[MV-frames]{MV-frames}
\thanks{2010 Mathematics Subject Classification.
06D35, 06E15, 06D50\\Key words: MV-algebra, frame, nucleus, MV-frame, $\ell u$-frame}
\author{Jean B. Nganou}
\address{Department of Mathematics and Statistics, University of Houston-Downtown, TX 77014} \email{nganouj@uhd.edu}
\begin{abstract} Complete MV-algebras are naturally equipped with frame structures. We call them MV-frames and investigate some of their main the properties as frames. We completely characterized algebraic MV-frames as well as regular MV-frames. In addition, we consider nuclei on MV-frames in general and on MV-frames of ideals of \L ukasiewicz rings. Finally, we used the Chang-Mundici functor to explore the frame structures of complete unital lattice-ordered groups.
\end{abstract}
\maketitle
\section{Introduction}
An MV-algebra can be defined \cite{CDM} as an Abelian monoid $(A,\oplus , 0)$ with an involution $\neg:A\to A$ (i.e., $\neg \neg x=x$ for all $x\in A$) satisfying the following axioms for all $x, y\in A$: $\neg 0\oplus x=\neg0$,
$\neg(\neg x\oplus y)\oplus y=\neg(\neg y\oplus x)\oplus x$. For any $x,y\in A$, if one writes $x\leq y$ when $\neg x\oplus y=\neg 0:=1$, then $\leq$ induces a partial order on $A$, which is in fact a lattice order where $x\vee y=\neg(\neg x\oplus y)\oplus y$ and $x\wedge y=\neg(\neg x\vee \neg y)$.\par
Complete MV-algebras are known to satisfy the following distributive laws \cite[Lem. 6.6.4]{CDM}: for every $x\in A$ and every $X\subseteq A$
$$x\wedge \bigvee X=\bigvee (x\wedge X)\hspace{1cm} x\vee \bigwedge X=\bigwedge (x\vee X) $$
 A \textit{frame} is a complete lattice $L$ in which  the frame law holds: $$a\wedge \bigvee S=\bigvee\{a\wedge s\mid s\in S\},$$ for all $a\in L$ and $S\subseteq L$. The theory of frames has been for the past century one of the most active area of lattice theory. Frames deal with a framework in which important algebraic and topological properties of ideals, filters or congruences in rings, lattices or other algebras are investigated (see \cite{BANASCH1.96, BBBANASCH3.2003, DB, Picado}).\par
As observed above, every complete MV-algebra is both a frame and a dual frame. For uniformity in terminologies, as the terminology Boolean frames is used complete Boolean algebra viewed frames, we shall use the terminology MV-frames for complete MV-algebras treated as frames. This work is intended to be an introductory treatment of MV-frames where we explore some of the basic frame notions in the MV-algebraic framework. In particular, we investigate which of the MV-frames are algebraic and discovered that they are precisely the Stone MV-algebras, that the coherent MV-frames are finite MV-algebras and the algebraic and regular MV-frames are powerset algebras. In addition, we consider nuclei on MV-frames and their nuclear which under certain assumptions are MV-frames of their own. One important class of MV-frames treated is that of MV-frames of ideals of \L ukasiewviez rings \cite{BD}. In this case, the standard radical of ideals is proved to carry many important properties. Given that MV-algebras are categorically equivalent to abelian lattice-ordered groups with distinguished units \cite{CM}, it is only natural that we peak at the other side of the equivalence and consider some of the frame concepts in the context of complete abelian $\ell$-groups. We use the equivalence to introduce some frame concepts on complete abelian $\ell$-groups and characterize algebraic $\ell u$-frames.\par
The paper is organized as follows:\\
In section 2, we introduce the notion of MV-frames and investigated some of their main attributes. In particular, we determine compact elements of completely distributive MV-frames and use it to characterize algebraic MV-frames and obtain that they are precisely the direct products of finite MV-chains (Theorem \ref{algebraicMV}). Furthermore, we obtain that coherent MV-frames are finite (Corollary \ref{coherent}), that among the algebraic MV-frames, the regular ones are exactly the powerset Boolean algebras (Corollary \ref{regular}), and that coherent frame morphisms are the complete homomorphisms preserving maximal compact elements (Proposition \ref{coherentmap}).\par
In section 3, we introduce nuclei on MV-frames and consider various type of nuclei. We obtain that the radical on the MV-frame of ideals of a \L ukasiewicz ring is a nucleus that is inductive (Proposition \ref{radical}). We also obtain that the nuclear of any inductive nucleus of MV-type is an algebraic MV-frame (Proposition \ref{nuclear-p}).\par
In section 4, we use the Chang-Mundici equivalence to consider the frame concepts on lattice-ordered abelian groups. We introduce the notions of $\ell u$-frames, of compact elements and algebraic $\ell u$-frames. We prove in particular that MV-frames correspond to $\ell u$-frames (Proposition \ref{mv=lu} and that the Chang-Mundici equivalence restricts to an equivalence between MV-frames and $\ell u$-frames (Theorem \ref{mvf=luf}). We also use the equivalence to derive a characterization of all algebraic $\ell u$-frames (Theorem \ref{algebraicluf}).\par

Recall that an element $a$ of a frame $L$ is called \textit{compact} if for every $S\subseteq L$ such that $a\leq \bigvee S$, there exists a finite subset $F$ of $S$ such that $a\leq \bigvee F$. The set of compact elements of $L$ is. denoted by $\mathfrak{k}(L)$. 
If the top element $1$ of $L$ is compact it is said that $L$ is compact. In addition, $L$ is said to have the \textit{finite intersection property} (always abbreviated FIP) if for any pair $a, b\in \mathfrak{k}(L)$, it follows that $a\wedge b\in \mathfrak{k}(L)$. The frame $L$ is \textit{algebraic} if every $a\in L$ is the join of compact elements below it ($a=\bigvee\{x\in \mathfrak{k}(L): x\leq a\}$). For every $x, y\in L$, we write $x\preceq y$ (read $x$ is way below $y$) if $y\vee x^\ast=1$ (where $x^\ast$ is the pseudo complement of $x$). An element $x\in L$ is called \textit{regular} if $x=\bigvee\{a\in L: a\preceq x\}$. The frame $L$ is \textit{regular} if all of its elements are regular; $L$ is \textit{coherent} if it is compact, algebraic and has the FIP. Given two frames $L_1$ and $L_2$, a \textit{frame homomorphism} from $L_1\to L_2$ is any lattice homomorphism that preserves arbitrary joins. A frame homomorphism $f:L_1\to L_2$ is called coherent if $f(\mathfrak{k}(L_1))\subseteq \mathfrak{k}(L_2)$. Details about frames, their basic terminologies and results can be found in the following references \cite{Dube, DB, Jo, MZ1, MZ}. Basic notions of MV-algebras can be reviewed in the classic texts \cite{CDM, Mu}.

Throughout the paper, $A$ will denote a complete MV-algebra and $B(A)$ the Boolean center of $A$, that is the largest Boolean subalgebra of $A$.
\section{Coherent MV-algebras}
In this section, we aim to determining the intersection of the class of MV-frames and known classes of frames such as algebraic and coherent. In other words, we wish to characterize all MV-frames that are algebraic and all MV-frames that are coherent. 

Recall that for $z\in A$, the pseudocomplement of $z$ is defined as $z^\ast=\bigvee\{x|x\wedge z=0\}$. This pseudocomplementation is known to carry the following properties (see \cite[p.133]{CDM}). (P.1) $z^\ast\in B(A)$, for all $z\in A$; (P.2) $z\leq z^{\ast\ast}$, for all $z\in A$; (P.3) $x\leq y$ implies $x^{\ast\ast}\leq y^{\ast\ast}$, for all $x, y\in A$; (P.4) $z^{\ast\ast}=z$, for all $z\in B(A)$.

We begin with the following lemma. 
\begin{lem}\label{compact}
For every $x\in A$, if $x\in \mathfrak{k}(A)$, then $x^{\ast\ast}\in \mathfrak{k}(B(A))$.
\end{lem}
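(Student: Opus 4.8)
The plan is to prove the contrapositive in spirit: assume $x\in\mathfrak{k}(A)$ and show $x^{\ast\ast}$ is compact as an element of the frame $B(A)$. First I would recall that $B(A)$ is itself a complete Boolean algebra (being the Boolean center of the complete MV-algebra $A$), so the notion of compactness in $B(A)$ makes sense; the subtlety is that joins in $B(A)$ need not agree with joins computed in $A$, so I must be careful about which ambient lattice a given supremum is taken in. The key structural fact I would invoke is property (P.1), namely $z^{\ast\ast}\in B(A)$ for every $z\in A$, together with the order-preservation (P.3) of the map $z\mapsto z^{\ast\ast}$; the map $\gamma:A\to B(A)$, $\gamma(z)=z^{\ast\ast}$, is a closure operator whose image is $B(A)$, and for a family $\{b_i\}\subseteq B(A)$ the join in $B(A)$ is $\gamma\bigl(\bigvee_A b_i\bigr)=\bigl(\bigvee_A b_i\bigr)^{\ast\ast}$.

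The main steps are as follows. Suppose $x^{\ast\ast}\leq \bigvee^{B(A)} S$ for some $S\subseteq B(A)$, where $\bigvee^{B(A)}$ denotes the join in $B(A)$. By the closure-operator description this join equals $\bigl(\bigvee_A S\bigr)^{\ast\ast}$, so $x^{\ast\ast}\leq\bigl(\bigvee_A S\bigr)^{\ast\ast}$. Now I use (P.2), $x\leq x^{\ast\ast}$, to get $x\leq\bigl(\bigvee_A S\bigr)^{\ast\ast}$; but I want to conclude $x\leq\bigvee_A S$ so that compactness of $x$ in $A$ can be applied. This is the crux, and here I would bring in the distributive law for complete MV-algebras stated in the introduction, $x\vee\bigwedge X=\bigwedge(x\vee X)$, or more directly exploit that $\bigl(\bigvee_A S\bigr)^{\ast}$ is the pseudocomplement and that for a Boolean element $b$, $x\leq b^{\ast\ast}=b$ forces nothing extra — the point is that $\bigl(\bigvee_A S\bigr)^{\ast}$ annihilates everything below $\bigvee_A S$, and one checks $\bigl(\bigvee_A S\bigr)^{\ast\ast}$ is the least Boolean element above $\bigvee_A S$. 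Since each $s\in S$ lies in $B(A)$, one has $\bigvee_A S\leq\bigl(\bigvee_A S\bigr)^{\ast\ast}$ but I actually need to descend: instead, I would observe that compactness should be applied directly to the cover $x^{\ast\ast}\leq\bigvee_A\{s^{\ast\ast}:s\in S\}$ in $A$ — wait, I would reorganize as follows.

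Cleaner route: from $x^{\ast\ast}\leq\bigl(\bigvee_A S\bigr)^{\ast\ast}$ and $x\leq x^{\ast\ast}$ we get $x\leq\bigl(\bigvee_A S\bigr)^{\ast\ast}$. Taking pseudocomplements and using that $\ast$ reverses order, $\bigl(\bigvee_A S\bigr)^{\ast}\leq x^{\ast}$, hence $x^{\ast\ast}\leq\bigl(\bigvee_A S\bigr)^{\ast\ast}$, which is circular; so the honest move is: $x\wedge\bigl(\bigvee_A S\bigr)^{\ast}\leq x^{\ast\ast}\wedge\bigl(\bigvee_A S\bigr)^{\ast}$, and since $x^{\ast\ast}\leq\bigl(\bigvee_A S\bigr)^{\ast\ast}$ this meet is $0$, so $x\wedge\bigl(\bigvee_A S\bigr)^{\ast}=0$, i.e.\ $x\leq\bigl(\bigvee_A S\bigr)^{\ast\ast}$; no progress. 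The resolution I would actually use: replace $x$ by the cover $x\leq\bigvee_A S\vee\bigl(\bigvee_A S\bigr)^{\ast}$ — note $\bigvee_A S\vee\bigl(\bigvee_A S\bigr)^{\ast}=1$ is false in general MV-algebras, but $\bigl(\bigvee_A S\bigr)^{\ast}\in B(A)$ and $x\leq\bigl(\bigvee_A S\bigr)^{\ast\ast}$ means $x\wedge\bigl(\bigvee_A S\bigr)^{\ast}=0$. Then $x=x\wedge 1=x\wedge\bigl(\bigvee_A(S\cup\{(\bigvee_A S)^\ast\})\bigr)$ and applying the frame law in $A$, $x=\bigvee_A\{x\wedge t\}$; since $x\wedge\bigl(\bigvee_A S\bigr)^\ast=0$ the extra term drops, giving $x=\bigvee_A\{x\wedge s:s\in S\}\leq\bigvee_A S$. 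Now compactness of $x$ in $A$ yields a finite $F\subseteq S$ with $x\leq\bigvee_A F$, whence $x^{\ast\ast}\leq\bigl(\bigvee_A F\bigr)^{\ast\ast}=\bigvee^{B(A)}F$ by (P.3) and the closure description, so $x^{\ast\ast}\in\mathfrak{k}(B(A))$.

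The main obstacle I anticipate is exactly the discrepancy between joins in $A$ and in $B(A)$: one must verify carefully that $\bigvee^{B(A)}S=\bigl(\bigvee_A S\bigr)^{\ast\ast}$ for $S\subseteq B(A)$, which rests on $(-)^{\ast\ast}$ being a closure operator onto $B(A)$ (immediate from (P.2), (P.3), (P.4)), and then the step $x\leq\bigl(\bigvee_A S\bigr)^{\ast\ast}\Rightarrow x\leq\bigvee_A S$, which is where the completeness of $A$ and the MV-distributive law $x\wedge\bigvee X=\bigvee(x\wedge X)$ do the essential work via the decomposition $1=\bigvee_A S\vee\bigl(\bigvee_A S\bigr)^{\ast}$ valid because $\bigl(\bigvee_A S\bigr)^{\ast}$ is Boolean and complements $\bigl(\bigvee_A S\bigr)^{\ast\ast}\geq\bigvee_A S$ — here I would just cite that for $b\in B(A)$, $b\vee b^{\ast}=1$, and take $b=\bigl(\bigvee_A S\bigr)^{\ast\ast}$ noting $\bigvee_A S\leq b$. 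Once that is pinned down the rest is routine bookkeeping with (P.1)--(P.4).
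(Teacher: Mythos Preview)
Your detour stems from a false worry. In a complete MV-algebra the Boolean center $B(A)$ is closed under arbitrary suprema taken in $A$ (this is precisely \cite[Cor.~6.6.5]{CDM}, which the paper cites), so for $S\subseteq B(A)$ the join $\bigvee_A S$ already lies in $B(A)$ and hence $\bigvee^{B(A)}S=\bigvee_A S$. The paper uses this implicitly: from $x^{\ast\ast}\le\bigvee S$ one gets $x\le\bigvee S$ by (P.2), compactness of $x$ in $A$ yields a finite $T\subseteq S$ with $x\le\bigvee T$, and then (P.3), (P.4) give $x^{\ast\ast}\le(\bigvee T)^{\ast\ast}=\bigvee T$. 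That is the whole proof.

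Your attempted workaround has an actual gap. You need $\bigvee_A S\vee\bigl(\bigvee_A S\bigr)^{\ast}=1$ in order to run the frame-law decomposition $x=\bigvee\{x\wedge s:s\in S\}$, but your justification---``take $b=(\bigvee_A S)^{\ast\ast}$ and use $b\vee b^{\ast}=1$, noting $\bigvee_A S\le b$''---only yields $(\bigvee_A S)^{\ast\ast}\vee(\bigvee_A S)^{\ast}=1$. From $a\le b$ and $b\vee c=1$ one cannot conclude $a\vee c=1$, so the step does not go through as written. The identity $\bigvee_A S\vee(\bigvee_A S)^{\ast}=1$ \emph{is} true here, but only because $\bigvee_A S\in B(A)$, which is exactly the fact you were trying to avoid. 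Once you grant that fact, your argument collapses to the paper's three-line proof; without it, the argument does not close.
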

\begin{proof}
Let $x\in \mathfrak{k}(A)$. Note that by P.1, $x^{\ast\ast}\in B(A)$. Moreover, suppose that $x^{\ast\ast}\leq \bigvee S$ for some $S\subseteq B(A)$. Then, by P.2 $x\leq \bigvee S$ and since $x$ is compact in $A$, there exists a finite subset $T$ of $S$ such that $x\leq \bigvee T$. Note that as $T\subseteq B(A)$, then $\bigvee T\in B(A)$. In addition, as $x\leq \bigvee T$, it follows from P.3 and P.4 that $x^{\ast\ast}\leq (\bigvee T)^{\ast\ast}=\bigvee T$. Therefore $x^{\ast\ast}$ is compact in $B(A)$ as required.
\end{proof}
\begin{prop}\label{BF}
If $A$ is an algebraic MV-frame, then $B(A)$ is an algebraic Boolean frame.
\end{prop}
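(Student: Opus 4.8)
The plan is to verify directly that $B(A)$ is an algebraic Boolean frame, i.e.\ that every $b\in B(A)$ is the join, \emph{computed in $B(A)$}, of the compact elements of $B(A)$ lying below it. The first step is to record that $B(A)$ really is a frame: for $S\subseteq B(A)$ the element $(\bigvee S)^{\ast\ast}$ (the inner join taken in $A$) belongs to $B(A)$ by P.1 and is, by P.3 and P.4, the least upper bound of $S$ in $B(A)$; thus $B(A)$ is a complete Boolean algebra, hence a Boolean frame. I will write $\bigvee\nolimits_{A}$ and $\bigvee\nolimits_{B(A)}$ for joins in $A$ and in $B(A)$ respectively, and note the one bookkeeping fact I need: $\bigvee\nolimits_{A}S\le\bigvee\nolimits_{B(A)}S$ for every $S\subseteq B(A)$, since an upper bound in $B(A)$ is a fortiori an upper bound in $A$.

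Next, fix $b\in B(A)$. Since $A$ is an algebraic MV-frame, $b=\bigvee\nolimits_{A}\{x\in\mathfrak{k}(A):x\le b\}$. For each $x\in\mathfrak{k}(A)$ with $x\le b$ I will apply Lemma \ref{compact} to get $x^{\ast\ast}\in\mathfrak{k}(B(A))$, and then observe that $x\le x^{\ast\ast}$ by P.2, while $x\le b\in B(A)$ together with P.3 and P.4 gives $x^{\ast\ast}\le b^{\ast\ast}=b$. So every such $x$ satisfies $x\le x^{\ast\ast}\le b$ with $x^{\ast\ast}$ a compact element of $B(A)$ below $b$.

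Combining these,
\[
b=\bigvee\nolimits_{A}\{x\in\mathfrak{k}(A):x\le b\}\ \le\ \bigvee\nolimits_{A}\{x^{\ast\ast}:x\in\mathfrak{k}(A),\ x\le b\}\ \le\ \bigvee\nolimits_{B(A)}\{y\in\mathfrak{k}(B(A)):y\le b\}\ \le\ b,
\]
where the last inequality holds because $b$ is an upper bound in $B(A)$ of the set of compact elements of $B(A)$ below it. Hence equality holds throughout, so $b$ is the join in $B(A)$ of the compact elements of $B(A)$ beneath it, and $B(A)$ is algebraic. There is no substantive obstacle here: the argument is essentially immediate from Lemma \ref{compact}, and the only point requiring care is keeping the suprema in $A$ and in $B(A)$ distinct and using that $b=b^{\ast\ast}$ for $b\in B(A)$ to relocate the compact witnesses from $\mathfrak{k}(A)$ into $\mathfrak{k}(B(A))$.
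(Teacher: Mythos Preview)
Your proof is correct and follows essentially the same approach as the paper's: both start from $b=\bigvee\{x\in\mathfrak{k}(A):x\le b\}$, replace each $x$ by $x^{\ast\ast}\in\mathfrak{k}(B(A))$ via Lemma~\ref{compact} and P.2--P.4, and sandwich to conclude. The only differences are cosmetic: you construct the completeness of $B(A)$ directly using $(\bigvee S)^{\ast\ast}$, whereas the paper simply cites \cite[Cor.~6.6.5]{CDM}; and you take extra care to distinguish $\bigvee_{A}$ from $\bigvee_{B(A)}$, which the paper leaves implicit (in fact the cited result gives that $B(A)$ is a complete sublattice, so the two joins coincide and the distinction is unnecessary, though your care is not wrong).
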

\begin{proof}
First, as $A$ is complete, $B(A)$ is a complete Boolean algebra \cite[Cor. 6.6.5]{CDM}. Suppose that $A$ is algebraic and let $x\in B(A)$. Then $x=\bigvee\{a\in \mathfrak{k}(A): a\leq x\}$. Note that for every $a\in \mathfrak{k}(A)$ such that $a\leq x$, by Lemma \ref{compact}, P.3 and P.4,  $a^{\ast\ast}\in \mathfrak{k}(B(A))$ and  $a^{\ast\ast}\leq x$. It follows that $\bigvee\{a^{\ast\ast}: a\in \mathfrak{k}(A), a\leq x\}\leq \bigvee\{a\in \mathfrak{k}(B(A)): a\leq x\}$. But since $a\leq a^{\ast\ast}$, then $x=\bigvee\{a\in \mathfrak{k}(A): a\leq x\}\leq \bigvee\{a^{\ast\ast}: a\in \mathfrak{k}(A), a\leq x\}\leq \bigvee\{a\in \mathfrak{k}(B(A)): a\leq x\}$. Thus, $x\leq \bigvee\{a\in \mathfrak{k}(B(A)): a\leq x\}\leq x$ and $x=\bigvee\{a\in \mathfrak{k}(B(A)): a\leq x\}$. Whence, $B(A)$ is an algebraic Boolean frame.
\end{proof}
\begin{cor}\label{alg=ch}
Every algebraic MV-frame is isomorphic to a direct product of the form $\prod_{k\in K}A_k$, where each $A_k$ is either a finite MV-chain or the standard MV-algebra $[0,1]$. 
\end{cor}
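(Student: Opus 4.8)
The plan is to read the decomposition off Proposition~\ref{BF} by splitting $A$ along the atoms of $B(A)$. First I would record the standard fact that a complete Boolean algebra is algebraic as a frame if and only if it is atomic, hence a powerset algebra: the compact elements of a complete Boolean algebra form an ideal, and if some $b\neq 0$ had no atom below it, then the interval $[0,b]$ would be atomless and one could split a nonzero compact $c\leq b$ into a disjoint family $d_1,d_2,\dots\leq c$ of nonzero (hence compact) elements whose join $\bigvee_i d_i\leq c$ is compact yet covered by no finite subjoin -- a contradiction; so the only compact element below any $b\neq 0$ would be $0$, against algebraicity. (This may simply be cited, e.g.\ from \cite{Jo,DB}.) Applying it to $B(A)$, which is a complete Boolean algebra by \cite[Cor. 6.6.5]{CDM} and an algebraic Boolean frame by Proposition~\ref{BF}, gives $B(A)\cong\mathcal{P}(K)$ with $K$ the set of atoms of $B(A)$; write $\{e_k\}_{k\in K}$ for these atoms, so they are pairwise disjoint with $\bigvee_k e_k=1$.

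Next I would put $A_k:=[0,e_k]$, an MV-algebra under $\neg_{e_k}x=\neg x\wedge e_k$ and $x\oplus_{e_k}y=(x\oplus y)\wedge e_k$, and a complete one since it is an interval of the complete lattice $A$ whose top $e_k$ lies in $B(A)$. The map $\varphi\colon A\to\prod_{k\in K}A_k$, $a\mapsto(a\wedge e_k)_{k}$, is an MV-homomorphism (a routine check using that the $e_k$ are idempotent); it is injective because $a=a\wedge\bigvee_k e_k=\bigvee_k(a\wedge e_k)$ by the infinite distributive law available in complete MV-algebras, and it is surjective because for any family $x_k\leq e_k$ the element $\bigvee_k x_k$ satisfies $(\bigvee_k x_k)\wedge e_j=\bigvee_k(x_k\wedge e_j)=x_j$, the terms with $k\neq j$ vanishing as $x_k\wedge e_j\leq e_k\wedge e_j=0$. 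Hence $A\cong\prod_{k\in K}A_k$, an isomorphism of frames as well.

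It then remains to identify the factors. Since $B(A)\cong\mathcal{P}(K)$, the Boolean center of $A_k$ equals $\{b\in B(A):b\leq e_k\}=\{0,e_k\}$ because $e_k$ is an atom of $B(A)$, so $A_k$ is a directly indecomposable complete MV-algebra, and it suffices to show that any such $C$ is a finite MV-chain or $[0,1]$. I would first note that $a\neq 0$ forces $a^{\ast}=0$: by (P.1) we have $a^{\ast}\in B(C)=\{0,1\}$, while $a\wedge a^{\ast}=\bigvee\{a\wedge x:x\wedge a=0\}=0$ by the distributive law, so $a^{\ast}=1$ would give $a=a\wedge a^{\ast}=0$. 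Thus $C$ has no pair of disjoint nonzero elements. But if $C$ were not a chain, incomparable $a,b$ would give $a\ominus b\neq 0\neq b\ominus a$ (since $x\ominus y=0$ iff $x\leq y$) together with $(a\ominus b)\wedge(b\ominus a)\leq(a\wedge\neg b)\wedge\neg a\leq a\wedge\neg a=0$ (using $x\ominus y\leq x\wedge\neg y$), exhibiting a disjoint nonzero pair -- a contradiction. Hence $C$ is a complete MV-chain, and it is classical (via Hölder's theorem and the Chang--Mundici correspondence, cf.\ \cite{CDM,Mu}) that a complete MV-chain is either a finite chain or $[0,1]$.

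All the steps are short; the one demanding the most care is the product decomposition $A\cong\prod_{k\in K}A_k$ along the possibly infinite orthogonal family $\{e_k\}$, where both injectivity and surjectivity of $\varphi$ rest squarely on the infinite distributive law. As a coda I would observe that, since a direct factor of an algebraic frame is again algebraic while $[0,1]$ is not algebraic, no factor can in fact be $[0,1]$; this upgrades the corollary to the statement that every algebraic MV-frame is a direct product of finite MV-chains (Theorem~\ref{algebraicMV}).
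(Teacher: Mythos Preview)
Your approach is sound and genuinely different from the paper's. After using Proposition~\ref{BF} to see that $B(A)$ is atomic (which you argue directly, whereas the paper just asserts it), the paper invokes \cite[Thm.~6.8.1]{CDM} to conclude that $A$ is completely distributive and then cites the structure theorem \cite[Thm.~2.2]{jbn2} for such MV-algebras to obtain the product decomposition. You instead construct the decomposition by hand, splitting $A$ along the atoms $\{e_k\}$ of $B(A)$ via the infinite distributive law and then identifying each factor $[0,e_k]$ as a complete MV-chain by showing its Boolean center is trivial. This is more self-contained, avoids the detour through complete distributivity, and---as your coda observes---delivers Theorem~\ref{algebraicMV} in the same breath, since an algebraic factor cannot be $[0,1]$.

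There is one local error to repair. In the step showing that a complete $C$ with $B(C)=\{0,1\}$ is a chain, you bound $(a\ominus b)\wedge(b\ominus a)\leq a\wedge\neg a$ and then assert $a\wedge\neg a=0$. That equality is false in any non-Boolean MV-algebra (take $a=\tfrac12$ in $\L_3$, where $a\wedge\neg a=\tfrac12$). The identity you actually need, $(a\ominus b)\wedge(b\ominus a)=0$, does hold in every MV-algebra---it is the MV translation of $(a-b)^{+}\wedge(a-b)^{-}=0$ in the enveloping $\ell$-group, and appears among the basic identities in \cite[Ch.~1]{CDM}---so just cite it directly rather than routing through $a\wedge\neg a$. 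With that fix your argument goes through.
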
 
\begin{proof}
Let $A$ be an algebraic MV-frame. Then, $B(A)$ is an algebraic Boolean frame by Proposition \ref{BF}. Recall that in a complete Boolean algebra, the compact elements are precisely the finite joins of atoms. It follows that algebraic Boolean frames must be atomic. Thus, as $A$ is a complete MV-algebra, by the above and  \cite[Cor. 6.6.5]{CDM}, $B(A)$ is a complete and atomic Boolean algebra \cite[Cor. 6.6.5]{CDM}. It follows from \cite[Thm. 6.8.1]{CDM} that $A$ is complete and completely distributive. The conclusion is clear from \cite[Thm. 2.2]{jbn2}. 
\end{proof}
\begin{prop}\label{compact-chmv}
Let $A:=\prod_{k\in K}A_k$, where each $A_k$ is either a finite MV-chain or the standard MV-algebra $[0,1]$.\\ Let $I=\{k\in K: A_k \; \mbox{is a finite}\; MV-\mbox{chain}\}$ and $J=\{k\in K:A_k=[0,1]$.\\
Then, for every $\alpha\in A$, $\alpha$ is compact if and only if (c1) $\alpha\in \oplus_{k\in K}A_k$, and (c2) $\{k\in K: \alpha(k)\ne 0\}\subseteq I$.
\end{prop}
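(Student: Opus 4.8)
The plan is to exploit the fact that in the product $A=\prod_{k\in K}A_k$ (which is complete, each factor being either a finite chain or $[0,1]$) arbitrary joins are computed coordinatewise, so that the compactness of $\alpha$ reduces to statements about the coordinate joins $\bigvee_{s\in S}s(k)$. Two elementary observations will do most of the work: in a finite lattice every element is compact, whereas in $[0,1]$ the only compact element is $0$, since any $a>0$ equals $\bigvee_n a(1-1/n)$ with no finite subfamily reaching $a$; and in any complete lattice a finite join of compact elements is again compact.

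For the ``if'' direction, assume (c1) and (c2) and put $F=\{k:\alpha(k)\ne 0\}$, a finite subset of $I$. Let $\alpha^{(k)}$ denote the element of $A$ equal to $\alpha(k)$ in coordinate $k$ and to $0$ in all other coordinates; then $\alpha=\bigvee_{k\in F}\alpha^{(k)}$, so by the remark on finite joins of compact elements it suffices to show that each $\alpha^{(k)}$ with $k\in I$ is compact in $A$. If $\alpha^{(k)}\le\bigvee S$ then $\alpha(k)\le\bigvee_{s\in S}s(k)$ inside the finite chain $A_k$, and this join, being a supremum in a finite set, is attained at some $s_0\in S$; hence $\alpha^{(k)}\le s_0$, and a single member of $S$ already suffices. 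Thus $\alpha^{(k)}$, and therefore $\alpha$, is compact.

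For the ``only if'' direction I will prove the contrapositive, in two cases. If (c2) fails, choose $k_0\in J$ with $a:=\alpha(k_0)>0$, and let $s_n$ agree with $\alpha$ off coordinate $k_0$ and take the value $a(1-1/n)$ at $k_0$; then $\bigvee_n s_n=\alpha$, while the join of any finite subfamily has $k_0$-coordinate strictly below $a$, so $\alpha$ is not compact. If (c1) fails, the support $F$ of $\alpha$ is infinite; for each finite $G\subseteq F$ let $\alpha_G$ be the truncation of $\alpha$ to $G$ (equal to $\alpha$ on $G$, zero elsewhere). Then $\bigvee_{G}\alpha_G=\alpha$, but the join of finitely many $\alpha_{G_1},\dots,\alpha_{G_m}$ equals the truncation of $\alpha$ to the finite set $G_1\cup\dots\cup G_m\subsetneq F$, which vanishes at some coordinate where $\alpha$ does not; so again $\alpha$ is not compact. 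Combining the two directions yields the equivalence. There is no real obstacle here beyond careful bookkeeping of coordinates; the one conceptual point is that the non-attainment of suprema in $[0,1]$ is precisely what forces condition (c2), while the finiteness of the support is exactly what condition (c1) records.
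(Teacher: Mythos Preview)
Your proof is correct and follows essentially the same approach as the paper's own proof: coordinatewise suprema, the fact that suprema in a finite chain are attained, and an increasing sequence in $[0,1]$ to witness non-compactness at a coordinate in $J$. The only cosmetic differences are that for the ``if'' direction you decompose $\alpha$ as a finite join of one-coordinate elements and appeal to the standard fact that finite joins of compact elements are compact (the paper handles the finite support $F$ in one step), and for the failure of (c1) you use truncations to finite subsets of the support while the paper uses the family of single-coordinate slices $\beta_k$.
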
 
\begin{proof}
$\Rightarrow):$ Suppose that $\alpha$ is compact in $A$. For each $k\in K$, define $\beta_k\in A$ by: 
$$
\beta_k(j)=
\begin{cases}
\alpha(k), \mbox{if}\; j=k\\
0,\; \; \; \; \; \; \mbox{if}\; j\ne k
\end{cases}
$$
Then $\alpha=\bigvee\{\beta_k: k\in K\}$ and since $\alpha$ is compact, there exists a finite subset $F$ of $K$ such that $\alpha\leq\bigvee\{\beta_k: k\in F\}$. It follows that $\alpha(k)=0$ for all $k\notin F$ since $\beta(k)=0$ for all $k\notin F$ and the suprema in $A$ are computed coordinate-wise. Thus, $\alpha\in \oplus_{k\in K}A_k$, which is $(c1)$. For $(c2)$, suppose by contradiction that there exists $j\in J$ such that $\alpha(j)\ne 0$. Then, choose a strictly increasing sequence $(x_{jn})_n\subseteq [0,1]$ that converges to $\alpha(j)$. Now, define the sequence $(\beta_n)_n\subseteq A$ by:
$$
\beta_n(k)=
\begin{cases}
x_{jn}, \mbox{if}\; k=j\\
\alpha(k),\; \mbox{if}\; k\ne j
\end{cases}
$$
Then, $\alpha\leq \bigvee\{\beta_n: n\in \mathbb{N}\}$ and there are no finite subsets $F\subseteq \mathbb{N}$ such that $\alpha\leq \bigvee\{\beta_n: n\in F\}$. This contradicts the compactness of $\alpha$ and $\alpha(k)=0$ for all $k\in J$.\\
$\Leftarrow):$ Suppose that $\alpha\in A$ satisfies $(c1), (c2)$. Let $F:=\{k\in K: \alpha(k)\ne 0\}$. Now, let $\alpha\leq \bigvee\{\beta_x: x\in X\}$ for some $X\subseteq K$. For each $k\in F$, $k\in I$ and $(\bigvee\{\beta_x: x\in X\})(k)=\mbox{Max}\{\beta_x(k): x\in X\}$. So, there exists $x_k\in X$ such that $(\bigvee\{\beta_x: x\in X\})(k)=\beta_{x_k}(k)$. It follows that $\alpha\leq \bigvee\{\beta_{x_k}: x\in F\}$ and $\alpha$ is compact. 
\end{proof}
We obtain the following straight from Proposition \ref{compact-chmv}.
\begin{cor}\label{compact-p} For every nonempty set $X$ and $\{n_x: x\in X\}$ a set of integers greater than or equal to $2$,
$\mathfrak{k}([0,1]^X)=\{0\}$ and $\mathfrak{k}(\prod_{x\in X}\L_{n_x})=\oplus_{x\in X}\L_{n_x}$
\end{cor}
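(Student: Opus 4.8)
The plan is simply to specialize Proposition \ref{compact-chmv} to the two products in question. In the first case, write $[0,1]^X = \prod_{x \in X} A_x$ with $A_x = [0,1]$ for every $x \in X$; then in the notation of Proposition \ref{compact-chmv} the index sets are $I = \emptyset$ and $J = X$. An element $\alpha$ is therefore compact iff it satisfies (c1) $\alpha \in \oplus_{k \in K} A_k$ and (c2) $\{k : \alpha(k) \neq 0\} \subseteq I = \emptyset$. Condition (c2) forces $\alpha(k) = 0$ for all $k$, i.e.\ $\alpha = 0$; and $\alpha = 0$ trivially satisfies (c1) as well. Hence $\mathfrak{k}([0,1]^X) = \{0\}$.

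In the second case, $\prod_{x \in X} \L_{n_x} = \prod_{x \in X} A_x$ with each $A_x = \L_{n_x}$ a finite MV-chain (here the hypothesis $n_x \geq 2$ just guarantees nontriviality of each chain), so now $I = X$ and $J = \emptyset$. Condition (c2) becomes $\{k : \alpha(k) \neq 0\} \subseteq X$, which holds for every $\alpha$, so the characterization of Proposition \ref{compact-chmv} collapses to condition (c1) alone, namely $\alpha \in \oplus_{x \in X} \L_{n_x}$. Thus $\mathfrak{k}(\prod_{x \in X} \L_{n_x}) = \oplus_{x \in X} \L_{n_x}$.

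Since the statement is an immediate reading-off of Proposition \ref{compact-chmv}, I do not expect any genuine obstacle; the only points requiring a moment's care are correctly identifying the index sets $I$ and $J$ in each of the two cases and noting that $0$ belongs to $\oplus_{k} A_k$, so that the first displayed equality is a genuine singleton rather than the empty set. Both checks are routine.
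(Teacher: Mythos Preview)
Your proposal is correct and is exactly the intended argument: the paper itself simply states that the corollary follows ``straight from Proposition \ref{compact-chmv}'', and your two specializations (with $I=\emptyset$, $J=X$ in the first case and $I=X$, $J=\emptyset$ in the second) are precisely the routine unpacking of that proposition.
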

\begin{cor}
Every MV-frame of the form in Proposition \ref{compact-chmv} has the FIP.
\end{cor}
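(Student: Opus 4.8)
The plan is to check the two defining conditions (c1) and (c2) of Proposition \ref{compact-chmv} directly for the meet of two compact elements, exploiting that everything in a product of MV-chains is computed coordinatewise. Write $A=\prod_{k\in K}A_k$ as in the statement, with $I=\{k\in K: A_k \text{ is a finite MV-chain}\}$, and let $\alpha,\beta\in\mathfrak{k}(A)$. For every $k\in K$ one has $(\alpha\wedge\beta)(k)=\alpha(k)\wedge\beta(k)$, and since each $A_k$ is totally ordered, $\alpha(k)\wedge\beta(k)=0$ exactly when $\alpha(k)=0$ or $\beta(k)=0$.

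Consequently $\{k\in K:(\alpha\wedge\beta)(k)\ne 0\}=\{k\in K:\alpha(k)\ne 0\}\cap\{k\in K:\beta(k)\ne 0\}\subseteq\{k\in K:\alpha(k)\ne 0\}$. By condition (c1) applied to $\alpha$, this last set is finite, hence $\alpha\wedge\beta\in\oplus_{k\in K}A_k$, which is (c1) for $\alpha\wedge\beta$; and by condition (c2) applied to $\alpha$, the set is contained in $I$, which is (c2) for $\alpha\wedge\beta$. Thus $\alpha\wedge\beta\in\mathfrak{k}(A)$ by Proposition \ref{compact-chmv}, and since $\alpha,\beta$ were arbitrary, $A$ has the FIP.

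There is no real obstacle here: the whole argument rests on the single observation that, in this class of frames, the support of a coordinatewise meet is contained in the support of either factor, so the set $\mathfrak{k}(A)$ — described by (c1)+(c2) as the finitely supported elements whose support lies inside $I$ — is automatically closed under $\wedge$. (In fact it is even a downward-closed subset of $A$, since $\gamma\le\alpha$ forces the support of $\gamma$ into that of $\alpha$; this gives a slightly slicker phrasing: $\alpha\wedge\beta\le\alpha$ and $\mathfrak{k}(A)$ is down-closed.) The only point worth stating with any care is that meets in $\prod_{k\in K}A_k$ are genuinely taken coordinatewise, which is immediate from the construction of products of MV-algebras and was already used in the proof of Proposition \ref{compact-chmv}.
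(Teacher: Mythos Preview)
Your proof is correct and is precisely the direct verification the paper has in mind: the corollary is stated in the paper with no proof beyond the remark that it follows ``straight from Proposition~\ref{compact-chmv}'', and your argument is exactly that---checking that (c1) and (c2) pass to $\alpha\wedge\beta$ via the coordinatewise description of meets. Your parenthetical observation that $\mathfrak{k}(A)$ is in fact downward closed (so that $\alpha\wedge\beta\le\alpha\in\mathfrak{k}(A)$ already suffices, without ever invoking that the $A_k$ are chains) is a slightly cleaner way to phrase the same idea.
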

\begin{definition}
Let $L$ be an algebraic frame. An element $a\in \mathfrak{k}(L)$ is called maximal compact if it is a maximal element in $(\mathfrak{k}(L), \leq)$.
\end{definition}
We can now prove our first result characterizing all algebraic MV-frames.
\begin{thm}\label{algebraicMV}
Algebraic MV-frames are up to isomorphism the direct products of finite MV-chains.
\end{thm}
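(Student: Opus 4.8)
The plan is to prove both implications, leaning on the structure theorem already obtained. For the easy direction, I would show directly that any product $A=\prod_{x\in X}\L_{n_x}$ of finite MV-chains is algebraic: given $\alpha\in A$, the elements $\beta_k$ (with $\beta_k(k)=\alpha(k)$ and $\beta_k(j)=0$ for $j\ne k$) used in the proof of Proposition~\ref{compact-chmv} all have singleton support contained in $I=X$, hence are compact by that proposition; they lie below $\alpha$, and $\alpha=\bigvee_{k\in X}\beta_k$ since joins are computed coordinatewise. Therefore $\alpha=\bigvee\{\gamma\in\mathfrak{k}(A):\gamma\le\alpha\}$, which is exactly algebraicity.

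For the converse, let $A$ be an algebraic MV-frame. By Corollary~\ref{alg=ch}, $A\cong\prod_{k\in K}A_k$ with each $A_k$ a finite MV-chain or $[0,1]$; I would adopt the notation $I,J$ of Proposition~\ref{compact-chmv} and aim to show $J=\emptyset$. Supposing $j\in J$, I would test algebraicity on the element $\alpha$ defined by $\alpha(j)=1$ and $\alpha(k)=0$ for $k\ne j$: any compact $\gamma\le\alpha$ has support inside $\{j\}$, but condition (c2) of Proposition~\ref{compact-chmv} forces that support into $I$, and $j\notin I$, so $\gamma=0$. Then $\bigvee\{\gamma\in\mathfrak{k}(A):\gamma\le\alpha\}=0\ne\alpha$, contradicting the assumption that $A$ is algebraic. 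Hence every $A_k$ is a finite MV-chain, and $A$ is, up to isomorphism, a product of finite MV-chains.

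I do not anticipate a genuine obstacle: the argument is essentially bookkeeping layered on top of Corollary~\ref{alg=ch} and Proposition~\ref{compact-chmv}. The only point that needs a moment's care is the choice of test element in the converse — it must be concentrated on a single $[0,1]$-coordinate with a nonzero value there, so that (c2) annihilates every compact element below it. Equivalently, one could invoke Corollary~\ref{compact-p} (that $\mathfrak{k}([0,1])=\{0\}$, so $[0,1]$ fails to be algebraic) and argue that an algebraic frame admits no non-algebraic direct factor, but the coordinatewise computation above is more transparent and self-contained.
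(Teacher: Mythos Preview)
Your proposal is correct and follows essentially the same approach as the paper: both directions rely on Corollary~\ref{alg=ch} and Proposition~\ref{compact-chmv}, with the backward direction using the singleton-support elements $\beta_t$ to witness algebraicity, and the forward direction testing the element concentrated at a single $[0,1]$-coordinate to derive a contradiction. The argument and even the choice of test element coincide with the paper's proof.
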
 
\begin{proof}
$\Rightarrow):$ Suppose that $A$ is an algebraic frame. Then by Corollary \ref{alg=ch}, $A=\prod_{k\in K}A_k$, where each $A_k$ is either a finite MV-chain or the standard MV-algebra $[0,1]$. Let $I=\{k\in K: A_k \; \mbox{is a finite}\; MV-\mbox{chain}\}$ and $J=\{k\in K:A_k=[0,1]$. We need to prove that $J=\emptyset$ . By contradiction, suppose that there exists $k_0\in K$ such that $A_{k_0}=[0,1]$. Define $\alpha\in A$ by: 
$$
\alpha(k)=
\begin{cases}
1, \mbox{if}\; k=k_0\\
0,\; \mbox{if}\; k\ne k_0
\end{cases}
$$
Note that by Proposition \ref{compact-chmv}, $0$ is the only compact element of $A$ that is below $\alpha$. It follows that $\alpha$ cannot be the supremum of the set of compact below it. Therefore, $A$ is not algebraic.\\
$\Leftarrow):$ Conversely, assume that $A:=\prod_{x\in X}\L_{n_x}$. For each $\alpha\in A$ and $t\in X$, define $\beta_t\in A$:
$$
\beta_t(x)=
\begin{cases}
\alpha(t), \mbox{if}\; x=t\\
0,\; \mbox{if}\; x\ne t
\end{cases}
$$
Then $\beta_t$ is compact for every $t\in X$ again by Proposition \ref{compact-chmv} and $\beta_t\leq \alpha$. Clearly $\alpha=\bigvee\{\beta_t: t\in X\}$, from which it follows that $\alpha=\bigvee\{\beta\in \mathfrak{k}(A): \beta\leq \alpha\}$. Thus, $A$ is algebraic as needed. 
\end{proof}
By the preceding Theorem, we are discovering that whether one considers algebraic MV-frames, profinite MV-algebras \cite[Thm. 2.5]{jbn}, or Stone MV-algebras \cite[Thm. 2.3]{jbn2}, one is dealing with the exact same class of MV-algebras.

Important characterizations of regular algebraic frames can be found in \cite[Theorem 2.4(a)]{MZ1} and they state in part that an algebraic frame $L$ is regular if and only if $a\vee a^\ast=1$, for all $a\mathfrak{k}(L)$.
\begin{cor}\label{regular}
Let $A$ be an algebraic MV-frame. Then, $A$ is regular if and only if $A$ is isomorphic to a powerset (Boolean) algebra.
\end{cor}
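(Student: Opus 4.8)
The plan is to reduce everything to Theorem~\ref{algebraicMV} together with the characterization of regular algebraic frames recalled just above (from \cite[Theorem 2.4(a)]{MZ1}): an algebraic frame $L$ is regular if and only if $a\vee a^\ast=1$ for every $a\in\mathfrak{k}(L)$. By Theorem~\ref{algebraicMV} I may assume $A=\prod_{x\in X}\L_{n_x}$ with each $n_x\geq 2$, and by Corollary~\ref{compact-p} the compact elements of $A$ are exactly the elements of $\oplus_{x\in X}\L_{n_x}$. Since pseudocomplementation in a direct product of MV-algebras is computed coordinatewise, the regularity criterion above becomes a statement about the individual chains $\L_{n_x}$.

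For the forward implication, assume $A$ is regular; the goal is to show $n_x=2$ for all $x$, so that $A\cong\prod_{x\in X}\L_2\cong\mathcal{P}(X)$. Suppose instead that $n_t\geq 3$ for some $t$ and pick $c\in\L_{n_t}$ with $0<c<1$. Since $\L_{n_t}$ is a chain, $x\wedge c=0$ forces $x=0$, whence $c^\ast=0$ and $c\vee c^\ast=c\neq 1$. Let $\alpha\in A$ be given by $\alpha(t)=c$ and $\alpha(x)=0$ for $x\neq t$. Then $\alpha\in\oplus_{x\in X}\L_{n_x}$ and its support $\{t\}$ consists of indices of finite chains, so $\alpha$ is compact by Corollary~\ref{compact-p} (this is exactly where finiteness of each factor chain is used). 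On the other hand $(\alpha\vee\alpha^\ast)(t)=c\vee c^\ast=c\neq 1$, so $\alpha\vee\alpha^\ast\neq 1$, contradicting the regularity criterion. Hence $n_x=2$ for every $x$.

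For the converse, if $A$ is isomorphic to a powerset algebra then $A$ is a complete Boolean algebra, so for every $a\in A$ the pseudocomplement $a^\ast$ is the Boolean complement of $a$ and $a\vee a^\ast=1$; in particular this holds for every compact $a$, so $A$ is regular by the quoted characterization. (Equivalently, $a\preceq a$ for all $a$, and $b\preceq a$ implies $b\leq a$ since $b\wedge b^\ast=0$, so $a=\bigvee\{b:b\preceq a\}$.)

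I do not expect a genuine obstacle: the two nontrivial ingredients, namely the structure theorem for algebraic MV-frames and the identification of their compact elements, are already in hand, and what remains (coordinatewise pseudocomplements, the isomorphism $\prod_{x\in X}\L_2\cong\mathcal{P}(X)$, and verifying that the witness $\alpha$ is compact) is routine bookkeeping. The only mildly delicate point is making sure the chosen $\alpha$ really lies in $\mathfrak{k}(A)$, which is precisely why every factor being a finite chain is essential; the degenerate case $X=\emptyset$ (where $A\cong\mathcal{P}(\emptyset)$ and both sides hold trivially) should be noted in passing.
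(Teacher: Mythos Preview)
Your proposal is correct and follows essentially the same route as the paper's proof: both invoke Theorem~\ref{algebraicMV} to write $A=\prod_{x\in X}\L_{n_x}$, apply the Mart\'{i}nez--Zenk criterion that regularity is equivalent to $a\vee a^\ast=1$ for all compact $a$, and derive a contradiction from a compact element supported at a single coordinate where $n_x\geq 3$. The only cosmetic difference is that you phrase the pseudocomplement computation coordinatewise while the paper writes out $\alpha^\ast$ directly in the product.
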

\begin{proof}
Suppose that $A$ is an algebraic MV-frame that is regular. Then, by Theorem \ref{algebraicMV}, we may write $A=\prod_{x\in X}\L_{n_x}$ and by \cite[Theorem 2.4(a)]{MZ1}, $a\vee a^\ast=1$ for all $a\in \mathfrak{k}(A)$. Suppose that there exists $x_0\in X$ and $0<t<1$ in $\L_{n_{x_0}}$. Consider $\alpha\in A$ defined by $\alpha(x_0)=t$ and $\alpha(x)=0$ for all $x\ne x_0$. Then $\alpha\in \mathfrak{k}(A)$ by Proposition \ref{compact-chmv} and $(\alpha^\ast)(x_0)=0$ and $\alpha^\ast(x)=1$ for all $x\ne x_0$. It follows that $(\alpha\vee \alpha^\ast)(x_0)=t$ and $\alpha\vee \alpha^\ast<1$. This contradicts the fact that $A$ is regular. Therefore, for every $x\in X$, $n_x=2$ and $\L_{n_x}=\mathbf{2}$, the two-element Boolean algebra. Thus, $A=\prod_{x\in X}\mathbf{2}=\mathbf{2}^X\cong \mathcal{P}(X)$. Conversely, it is that every powerset algebra  satisfies $a\vee a^\ast=1$ for all $a\in \mathfrak{k}(A)$ and is regular by  \cite[Theorem 2.4(a)]{MZ1}.

\end{proof}
\begin{cor}\label{coherent}
The only coherent MV-frames are finite MV-algebras.
\end{cor}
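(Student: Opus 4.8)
The plan is to read coherence as the conjunction \emph{compact $+$ algebraic $+$ FIP}, to feed the algebraicity clause into Theorem~\ref{algebraicMV} in order to get a concrete product decomposition, and then to exploit the compactness clause together with the description of compact elements furnished by Corollary~\ref{compact-p}.

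For the nontrivial inclusion, I would argue as follows. Suppose $A$ is a coherent MV-frame. Since $A$ is coherent it is in particular algebraic, so by Theorem~\ref{algebraicMV} we may assume $A=\prod_{x\in X}\L_{n_x}$ with each $\L_{n_x}$ a finite MV-chain; discarding any trivial factors changes nothing up to isomorphism, so we may take every $n_x\ge 2$. Since $A$ is coherent it is also compact, i.e.\ $1\in\mathfrak{k}(A)$. By Corollary~\ref{compact-p}, $\mathfrak{k}(A)=\oplus_{x\in X}\L_{n_x}$, the set of tuples of finite support. But $1(x)$ is the top of $\L_{n_x}$, hence nonzero for every $x\in X$, so the support of $1$ is all of $X$; therefore $X$ is finite, and $A$, being a finite product of finite chains, is a finite MV-algebra.

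For the converse, let $A$ be a finite MV-algebra. By the classical structure theorem for finite MV-algebras, $A$ is isomorphic to a finite product of finite MV-chains, so it has the shape considered in Proposition~\ref{compact-chmv} with finite index set; hence Theorem~\ref{algebraicMV} gives that $A$ is algebraic. Since $A$ is a finite lattice, every join is a finite join, so every element of $A$ is compact; in particular $1$ is compact, so $A$ is compact, and $\mathfrak{k}(A)=A$ is trivially closed under $\wedge$, so $A$ has the FIP. Thus $A$ is coherent.

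The whole argument is short, and the only place where something substantive happens is the forward direction: the hypothesis $n_x\ge 2$ (nontriviality of the chains) is exactly what forces the top element of an infinite product to have infinite support and therefore to fail to be compact. Everything else is bookkeeping, modulo recalling the decomposition of finite MV-algebras into finite products of finite chains.
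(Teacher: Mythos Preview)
Your proof is correct and follows essentially the same route as the paper: feed algebraicity into Theorem~\ref{algebraicMV} to get $A\cong\prod_{x\in X}\L_{n_x}$, then use Corollary~\ref{compact-p} to see that $1$ has full support and hence lies in $\oplus_{x\in X}\L_{n_x}$ only when $X$ is finite. Your converse is slightly more explicit than the paper's (which simply declares finite MV-algebras ``clearly'' coherent), but the substance is identical.
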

\begin{proof}
Since finite MV-algebras are finite direct products of finite MV-chains \cite[Prop. 3.6.5]{CDM}, it is clear that these are coherent. Conversely, suppose that $A$ is a coherent MV-frame. As $A$ is algebraic, then by Theorem \ref{algebraicMV}, $A\cong \prod_{x\in X}\L_{n_x}$, for some set $X$ and a set of integers $(n_x)_{x\in X}$. Since $A$ is compact, $\prod_{x\in X}\L_{n_x}$ is compact. Note that if $X$ is infinite, then $1:=(1)_{x\in X}\notin  \oplus_{x\in X}\L_{n_x}$, which means by Corollary \ref{compact-p} that $1$ is not compact. This is contradictory to $\prod_{x\in X}\L_{n_x}$ is compact. Therefore, $X$ is finite and it follows that $A$ is finite.
\end{proof}
As observed above, $\{\L_{n_x}:x\in X\}$  $\prod_{x\in X}\L_{n_x}$ is a Stone MV-algebra under the product of the discrete topologies. We seek to characterize the coherent maps between algebraic MV-frames.\\
We start with the following Remark.
\begin{rem}\label{maxk}
Let $A:=\prod_{x\in X}\L_{n_x}$ be an algebraic MV-frame. The maximal compact elements of $A$ are of the form $\chi_F$, (the characteristic function of $F$ in $X$), for some finite subset $F$ of $X$. Indeed as $\mathfrak{k}(A)=\oplus_{x\in X}\L_{n_x}$, then the maximal elements of $\prod_{x\in X}\L_{n_x}$ are of the stated form.
\end{rem}
\begin{prop}\label{coherentmap}
Let $A, B$ be algebraic MV-frames and $\varphi:A\to B$ be an MV-homomorphism. Then the following assertions are equivalent:
\begin{itemize}
\item[(i).] $\varphi$ is coherent;
\item[(ii).] $\varphi$ is complete and preserves maximal compact elements;
\item[(ii).] $\varphi$ is continuous and preserves maximal compact elements.
\end{itemize}
\end{prop}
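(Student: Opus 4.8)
The plan is to push everything through the product representation of Theorem \ref{algebraicMV}: fix MV-isomorphisms $A\cong\prod_{x\in X}\L_{n_x}$ and $B\cong\prod_{y\in Y}\L_{m_y}$ and transport $\varphi$ accordingly. By Corollary \ref{compact-p} the compact elements of $A$ (resp.\ $B$) are exactly the finitely supported functions, so $\mathfrak{k}(A)$ and $\mathfrak{k}(B)$ are down-sets, and by Remark \ref{maxk} the maximal compact elements are precisely the $\chi_F$ with $F$ a finite subset of the index set. I would also record two standard facts to use freely: $\varphi$, being an MV-homomorphism, is order preserving and carries the Boolean center into the Boolean center; and here $B(\prod_{x\in X}\L_{n_x})=\{\chi_G:G\subseteq X\}\cong\mathcal P(X)$, whose compact elements are the $\chi_F$ with $F$ finite.

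First I would dispatch (ii) $\Leftrightarrow$ (iii). Since an MV-homomorphism automatically preserves finite meets, ``complete'' is the same as ``is a frame homomorphism''; and with the product-of-discrete (Stone) topologies that exhibit $A$ and $B$ as Stone MV-algebras, continuity of an MV-homomorphism means each output coordinate depends on only finitely many input coordinates, which forces $\varphi$ to be induced by a map of index sets and hence to preserve arbitrary joins (the converse being clear). So ``complete'' and ``continuous'' name the same condition, and it remains to prove (i) $\Leftrightarrow$ (ii).

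The core is therefore (i) $\Leftrightarrow$ (ii). For (i) $\Rightarrow$ (ii): coherence presupposes being a frame homomorphism, so $\varphi$ is complete; and if $\chi_F$ is a maximal compact element of $A$ then $\varphi(\chi_F)\in B(B)$, say $\varphi(\chi_F)=\chi_G$ for some $G\subseteq Y$, while coherence makes $\chi_G$ compact in $B$, hence $G$ is finite and $\chi_G$ is a maximal compact of $B$ by Remark \ref{maxk}. For (ii) $\Rightarrow$ (i): $\varphi$ is complete by hypothesis, so it suffices to check $\varphi(\mathfrak{k}(A))\subseteq\mathfrak{k}(B)$; given $a\in\mathfrak{k}(A)$ with (finite) support $F$ we have $a\le\chi_F$, and $\chi_F$ being a maximal compact of $A$, its image $\varphi(\chi_F)$ is a maximal compact of $B$ by hypothesis, in particular compact, so $\varphi(a)\le\varphi(\chi_F)$ lands in the down-set $\mathfrak{k}(B)$.

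I expect the one genuinely delicate point to be that last step, which upgrades ``preserves maximal compacts'' to ``preserves all compacts''. It succeeds only because in an algebraic MV-frame every compact element sits below a maximal compact one and the compact elements form a down-set --- two properties that fail for general algebraic frames, but which Theorem \ref{algebraicMV}, Remark \ref{maxk} and Corollary \ref{compact-p} make available here. Everything else, including the $B(A)\cong\mathcal P(X)$ bookkeeping and the translation of ``continuous'', is routine.
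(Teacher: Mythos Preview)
Your argument is correct and tracks the paper's proof in outline, but the crucial implication $(ii)\Rightarrow(i)$ is handled by a genuinely different mechanism. The paper proceeds multiplicatively: given a compact $\alpha$ with finite support $F$, it chooses $N$ with $N\alpha=\chi_F$, applies $\varphi$ to get $N\varphi(\alpha)=\varphi(\chi_F)=\chi_G$ with $G$ finite, and reads off from $N\varphi(\alpha)=\chi_G$ that $\varphi(\alpha)$ has finite support. Your route is purely order-theoretic: since $\mathfrak{k}(B)=\oplus_{y\in Y}\L_{m_y}$ is a down-set (Corollary~\ref{compact-p}), $\varphi(\alpha)\le\varphi(\chi_F)\in\mathfrak{k}(B)$ already forces $\varphi(\alpha)\in\mathfrak{k}(B)$. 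This is shorter and isolates exactly the two structural facts you flag at the end (every compact sits below a maximal compact; compacts form a down-set), whereas the paper's version leans on the MV-operation $\oplus$ through the $N\alpha$ device.

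Two small points. First, in your sketch of $(ii)\Leftrightarrow(iii)$ the phrase ``induced by a map of index sets'' overstates what continuity yields: a continuous MV-homomorphism between such products need not be of the form $\alpha\mapsto\alpha\circ\sigma$ for some $\sigma:Y\to X$ (coordinate maps can be proper embeddings $\L_{n_x}\hookrightarrow\L_{m_y}$, and one coordinate of the output may depend on several inputs). The paper sidesteps this by simply invoking \cite[Prop.~3.5]{jbn}, so the discrepancy is cosmetic. Second, your identification of ``complete'' with ``frame homomorphism'' for an MV-homomorphism tacitly uses $\bigwedge S=\neg\bigvee\neg S$ together with $\varphi(\neg x)=\neg\varphi(x)$; the paper spells this out in its $(i)\Rightarrow(ii)$, and you should too.
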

\begin{proof}
$(i)\Rightarrow (ii):$ Assume that $\varphi$ is a coherent MV-homomorphism. Then $\varphi$ preserves arbitrary suprema. In addition, since $\bigwedge S=\neg (\bigvee \neg S)$ \cite[Lem. 6.6.3]{CDM} and $\varphi$ preserves $\neg$, then $\varphi$ preserves arbitrary infima as well. Hence $\varphi$ is complete. That $\varphi$ and preserves maximal compact elements is clear as it preserves all compact elements\\
$(ii)\Leftrightarrow (iii):$ In light of Theorem \ref{algebraicMV}, this equivalence is part of \cite[Prop. 3.5]{jbn}.\\
$(ii)\Rightarrow (i):$ Assume that $\varphi$ is complete and and preserves maximal compact elements. We only need to prove that $\varphi$ maps compact elements of $A$ to compact elements in $B$. By Theorem \ref{algebraicMV}, we may set $A:=\prod_{x\in X}\L_{n_x}$ and $B:=\prod_{y\in y}\L_{m_y}$. Let $\alpha \in \mathfrak{k}(A)$, then by Corollary \ref{compact-p} $\alpha\in \oplus_{x\in X}\L_{n_x}$. So, $\{x\in X: \alpha(x)\ne 0\}:=F$ is finite. Therefore, there exists an integer $N\geq 2$ such that $N\alpha=\chi_F$, the characteristic function of $F$ in $X$. Note that $\chi_F$ is a maximal compact element of $A$ (Remark \ref{maxk}), so $\varphi(\chi_F)$ is a maximal compact element of $B$. Thus, there exists $G\subseteq Y$ finite such that $\varphi(\chi_F)=\chi_G$. But as $N\alpha=\chi_F$, $N\varphi(\alpha)=\varphi(\chi_F)=\chi_G$ and $N\varphi(\alpha)=\chi_G$. From the latter, it follows that $\varphi(\alpha)\in \oplus_{y\in y}\L_{m_y}$ and $\varphi(\alpha)\in \mathfrak{k}(B)$.
\end{proof}
\begin{ex}
Consider the three-element \L ukasiewicz chain and natural inclusion $\tau: \L_3\to \L_3^{\mathbb{N}}$, that is $\tau(x)(n)=x$ for all $x\in \L_3$ and $n\in \mathbb{N}$. Then, $\tau$ is complete but does not preserve maximal compact elements since $\tau(\frac{1}{2})$ is not even compact. 
\end{ex} 
\begin{ex}
Consider $\varphi: \prod_{n=2}^\infty\L_n\to \prod_{n=2}^\infty\L_{2n-1}$ defined by $\varphi(\alpha)(n)=\alpha(n)$. Note that for every $n\geq 2$, the projection $\prod_{n=2}^\infty\L_n\to \L_n$ is equal to the composition of $\varphi$ followed by the projection $\prod_{n=2}^\infty\L_{2n-1}\to \L_{2n-1}$. This means that $\varphi$ is complete by \cite[Prop. 3.5]{jbn}. In addition, it is clear that $\varphi( \oplus_{n=2}^\infty\L_n)\subseteq \oplus_{n=2}^\infty\L_{2n-1}$, which means that $\varphi$ preserves compact elements. Thus, $\varphi$ is a coherent homomorphism. 
\end{ex}
\section{Nuclei of MV-frames}
A closure operation on an MV-algebra $A$ has the usual meaning, i.e maps from $A\to A$ that are extensive, monotonic nondecreasing, and idempotent.
\begin{definition} Suppose that $j:A\to A$ is a closure operator.
\begin{itemize}
\item[d1.] The sets of fixed points of $j$ is denoted by $jA$. 
\item[d2.] $j$ is called nucleus if $j(a\wedge b)=j(a)\wedge j(b)$, and in this case $jA$ is called nuclear. 
\item[d3.] A nucleus $j$ is dense if $j(0)=0$, that is $0\in jA$.
\item[d4.] If $A$ is an algebraic MV-frame, then $j$ is called inductive if $j(x)=\bigvee\{j(a): a\in \mathfrak{k}(A)\;\mbox{and}\; a\leq x\}$
\end{itemize}
\end{definition}
Note that as $j^2=j$, then $jA=j(A)$.
\begin{ex} 
\begin{enumerate}
\item The identity $j(x)=x$ and the constant map $j(x)=1$ are obvious examples of closure operators on any MV-algebra $A$.
\item The double-pseudocomplementation $j(x)=x^{\ast\ast}$ is a closure operator on any MV-algebra. This is clear from the properties of pseudocomplementation listed in Section 2. In addition, by the same listed properties, $j$ is nucleus and $jA=B(A)$, the Boolean center of $A$.
\item Let $X$ be a topological space and $A:=\mbox{Cont}(X)$ be the MV-algebra of continuous functions from $X\to [0,1]$. Fix $t_0\in [0,1]$ and define $j:A\to A$ by $j(f)(x)=\mbox{Max}(f(x),t_0)$. Then $j$ is a closure operator on $A$. Moreover, $j$ is nucleus and $jA=\{f\in A: f(x)\geq t_0\; \mbox{for all}\; x\in X\}$. 
\item Let $A:= \prod_{n=2}^\infty\L_n$ and define $j:A\to A$ by:
$$
j(\alpha)(n)=
\begin{cases}
\alpha(n), \mbox{if}\; n\; \mbox{is even}\\
1,\; \; \; \; \mbox{if}\; \;  n\; \mbox{is odd}
\end{cases}
$$
Then, $j$ is a nucleus closure operator on $A$. In addition, $jA=\{\alpha\in A: \alpha(2k+1)=1,\; \mbox{for all}\; k\geq 1\}$. Moreover, one can verify that $j$ is inductive.
\end{enumerate}
\end{ex}
As witnessed in the examples above, the nuclear $jA$ may fail to be a sub-MV-algebra of $A$. One would like to consider closure operators for which $jA$ is an algebraic MV-frame. 
\begin{definition}
A nucleus $j:A\to A$ will be called of MV-type if $jA$ is closed under $\neg$ and $\oplus$.
\end{definition}
\begin{ex}
Consider the standard MV-algebra $[0,1]$ (which is clearly an MV-frame) and $j:[0,1]\to[0,1]$ defined by $j(x)=\lceil x \rceil$. Then $j$ is a dense nucleus of MV-type. Note that $j$ is not inductive, however. This Example shows that being of MV-type is stricter weaker than requiring that $j$ preserves the MV-operations. Indeed, $j$ is of MV-type as stated ($j[0,1]=\L_2$) but $j$ does not preserve $\neg$.
\end{ex}
Note that a nucleus $j: A\to A$ is of MV-type if and only if $jA$ is a sub-MV-algebra of $A$. This is because $j(1)=1$ (as $j$ is extensive) and $0=\neg 1$ and since $jA$ is closed under $\neg$, then $j(0)=0$. 
\begin{ex}\label{nucl}
Consider $A:=\prod_{n=1}\L_{2n+1}$. For each $n\geq 1$, consider $j_n:\L_{2n+1}\to \L_{2n+1}$ and $j:A\to A$ defined by:
\begin{itemize}
\item[1.] $j_n(0)=0$,
\item[2.] $j_n(\frac{2k-1}{2n})=j_n(\frac{k}{n})=\frac{k}{n}$ for all $k=1, 2, \ldots, n$,
\item[3.] $j(\alpha)(n)=j_n(\alpha(n))$ for all $\alpha\in A$ and $n=1,2,\ldots $.
\end{itemize}
Then $j$ is a nucleus that is inductive and of MV-type. Indeed, one can verify that $jA=\prod_{n=1}\L_{n+1}$, which is clearly a sub-MV-algebra of $A$.
\end{ex} 
One area of algebra where both frames and nuclei naturally arise is the ideal theory of commutative rings. Indeed, such the ideals of a commutative unitary ring form a frame and closure operators of various types have been studied on this frame. We would like to consider broadly speaking the intersection of these topics and MV-frames. Belluce and Di Nola \cite{BD}  investigated and completely characterized the commutative rings $R$ generated by idempotents for which the frame Id$(R)$ of ideals of $R$ is an MV-algebra, that is an MV-frame. Recall \cite{BD} that in the MV-frame Id$(R)$, one has: $\neg I=I^\ast$, the annihilator of $I$, $I\oplus J=(I^\ast J^\ast)^\ast$, $0:=\{0\}$ and $1:=R$; if $\{I_x\}_{x\in X}\subseteq \mbox{Id}(R)$, then $\bigwedge_{x\in X}I_x=\cap_{x\in X}I_x$ and $\bigvee_{x\in X}I_x=\langle \cup_{x\in X}I_x\rangle$. These rings are called \L ukasiewicz rings and were shown \cite[Thm. 7.7]{BD} to be up to isomorphism the direct sums commutative unitary Artinian chain rings. 

For every \L ukasiewicz ring $R$, the MV-frame $\mbox{Id}(R)$ is algebraic. This follows from the fact that $\mbox{Id}(R)$ is complete and atomic (\cite[Prop. 3.15]{BD}, that complete and atomic MV-algebras are direct products of finite chains (\cite[Cor. 6.8.3]{CDM}) and Theorem \ref{algebraicMV} above.

We can now find some examples of nuclei of on the MV-frame Id$(R)$, where $R$ is a \L ukasiewicz ring either using classical constructions in ring theory or by taking advantage of the fact that Id$(R)$ is a direct product of finite chains. 

Let $R$ be a \L ukasiewicz ring and $A:=\mbox{Id}(R)$, the MV-algebra of ideals of $R$. From the preceding comment, we may write $A=\prod_{x\in X}\L_{n_x}$, for some nonempty set $X$ and $\{n_x:x\in X\}$ a set of integers greater than or equal to $2$. The next result uses these notations.
\begin{prop}\label{radical}
For $I\in \mbox{Id}(R)$, let $\sqrt{I}:=\bigcap\{P\in \mbox{Spec}(R): I\subseteq P\}$.\\
Then, $I\mapsto \sqrt{I}$ is (i) a closure operator, (ii) a nucleus, (iii) is inductive, (iv) is not dense, unless $n_x=2$ for all $x\in X$.
\end{prop}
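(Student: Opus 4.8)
The plan is to pin down an explicit formula for $\sqrt{\,\cdot\,}$ on $A=\prod_{x\in X}\L_{n_x}$ and then read all four assertions off it. By \cite[Thm.~7.7]{BD}, write $R=\bigoplus_{x\in X}R_x$ with each $R_x$ a commutative unitary Artinian chain ring. Then every ideal of $R$ has the form $\{(r_x)\in R: r_x\in I_x\ \mbox{for all }x\}$ for a family of ideals $I_x\subseteq R_x$, so $\mbox{Id}(R)$ is (order-)isomorphic to $\prod_{x\in X}\mbox{Id}(R_x)$; this is precisely the identification $A\cong\prod_{x\in X}\L_{n_x}$ under which $R_x\leftrightarrow 1$, the zero ideal $\leftrightarrow 0$, and the maximal ideal $\mathfrak{m}_x$ of $R_x$ $\leftrightarrow c_x$, where $c_x$ denotes the unique coatom of $\L_{n_x}$ (so $c_x=0$ exactly when $n_x=2$, i.e.\ when $R_x$ is a field). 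Throughout I use that a local Artinian ring has a nilpotent maximal ideal which is at once its only prime ideal and $\sqrt{(0)}$.

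The key step is the formula $\sqrt{\alpha}=\alpha\vee c$ for all $\alpha\in A$, where $c:=(c_x)_{x\in X}\in A$. To obtain it, first observe that the prime ideals of $R$ are exactly the ideals $P_x:=\big(\bigoplus_{y\ne x}R_y\big)\oplus\mathfrak{m}_x$, $x\in X$: a prime $P$ must contain $R_y$ for all but at most one index $y$, since $R_yR_z=\{0\}\subseteq P$ for $y\ne z$; being proper it omits exactly one such index; and on that exceptional summand it restricts to a prime ideal, hence to the maximal ideal. Under the identification of the previous paragraph the $P_x$ are precisely the coatoms $p^{(x)}$ of $\prod_{x\in X}\L_{n_x}$, where $p^{(x)}(x)=c_x$ and $p^{(x)}(y)=1$ for $y\ne x$. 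Hence $\sqrt{\alpha}=\bigwedge\{p^{(x)}:\alpha\le p^{(x)}\}=\bigwedge\{p^{(x)}:\alpha(x)<1\}$, and computing this meet coordinatewise gives $y$-coordinate equal to $1$ when $\alpha(y)=1$ and to $c_y$ when $\alpha(y)<1$, i.e.\ $\sqrt{\alpha}=\alpha\vee c$. (One can also bypass $\Spec(R)$ entirely, using the identity $\sqrt{I}=\{r\in R:r^n\in I\ \mbox{for some }n\ge 1\}$, valid in any commutative ring, together with nilpotence of each $\mathfrak{m}_x$ to conclude $(\sqrt{I})_x=\sqrt{I_x}$, which is $R_x$ or $\mathfrak{m}_x$ according as $I_x=R_x$ or not.) In particular $c=\sqrt{(0)}$ is the nilradical of $R$ and $\sqrt{I}=I+\mbox{Nil}(R)$.

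Granting the formula, the rest is short. For (i) and (ii): for any fixed element $c$ of a frame, $\alpha\mapsto\alpha\vee c$ is extensive, monotone and idempotent, hence a closure operator, and $(\alpha\wedge\beta)\vee c=(\alpha\vee c)\wedge(\beta\vee c)$ by distributivity of the lattice $A$, so it is a nucleus. For (iii): since $\mbox{Id}(R)$ is algebraic, $\alpha=\bigvee\{a\in\mathfrak{k}(A):a\le\alpha\}$, an index set that is nonempty because $0\in\mathfrak{k}(A)$, so
$$\bigvee\{\sqrt{a}:a\in\mathfrak{k}(A),\ a\le\alpha\}=\bigvee\{a\vee c:a\in\mathfrak{k}(A),\ a\le\alpha\}=\Big(\bigvee\{a:a\in\mathfrak{k}(A),\ a\le\alpha\}\Big)\vee c=\alpha\vee c=\sqrt{\alpha},$$
the middle equality being the elementary fact that $\bigvee_i(a_i\vee c)=\big(\bigvee_i a_i\big)\vee c$ for a nonempty family; thus $\sqrt{\,\cdot\,}$ is inductive. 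For (iv): $\sqrt{\,\cdot\,}$ is dense iff $\sqrt{0}=0$, i.e.\ iff $c=0$, i.e.\ iff $c_x=0$ for every $x\in X$; and $c_x$, being the coatom of $\L_{n_x}$, vanishes exactly when $\L_{n_x}=\mathbf{2}$, i.e.\ when $n_x=2$. Hence $\sqrt{\,\cdot\,}$ is not dense unless $n_x=2$ for all $x\in X$.

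I expect the only genuine difficulty to be establishing $\sqrt{\alpha}=\alpha\vee c$: describing $\Spec(R)$ correctly needs a little care with the non-unital ring that occurs when $X$ is infinite, and with the degenerate factors where $n_x=2$ and $\mathfrak{m}_x=\{0\}$. Once the formula is in hand, (i)--(iv) are immediate, as $\alpha\mapsto\alpha\vee c$ is a textbook nucleus which, over an algebraic frame, is inductive.
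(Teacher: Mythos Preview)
Your proof is correct, and it takes a somewhat different and cleaner route than the paper's own argument. Both proofs pass through the same coordinatewise description of the radical---the paper writes it as $(\sqrt{f})(x)=\frac{n_x-2}{n_x-1}$ when $f(x)<1$ and $(\sqrt{f})(x)=1$ when $f(x)=1$, while you package this as the single identity $\sqrt{\alpha}=\alpha\vee c$ with $c=(c_x)_x$ the tuple of coatoms; these are the same thing since $c_x$ is the coatom of $\L_{n_x}$. The divergence is in how the four assertions are then verified. The paper treats (i) and (ii) abstractly via standard commutative algebra (a citation for (i), the usual prime-containment argument for (ii)) and only invokes the explicit formula for (iii) and (iv); for (iii) it then runs a two-case argument, manufacturing explicit compact witnesses $\alpha_t$ and $\alpha_s$ and checking coordinatewise that their radicals recover $\sqrt{f}$. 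You instead derive all four items uniformly from the observation that $\alpha\mapsto\alpha\vee c$ is a textbook ``closed'' nucleus on any frame, and then (iii) falls out in one line from algebraicity of $A$ together with $\bigvee_i(a_i\vee c)=(\bigvee_i a_i)\vee c$. Your approach is shorter and more conceptual; the paper's is more hands-on but requires no frame-theoretic vocabulary beyond the definitions. Your alternative derivation of the formula via the elementwise radical $\{r:r^n\in I\}$ and nilpotence of each $\mathfrak{m}_x$ is a nice way to sidestep the bookkeeping about $\Spec(R)$ when $X$ is infinite, and is not in the paper.
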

\begin{proof}
(i) This is known (see, e.g., \cite[Ex. 2.1.2]{EP}(3)).\\
(ii) Given $I, J\in \mbox{Id}(R)$, and $P$ a prime ideal of $R$, $I\cap J\subseteq P$ if and only if $I\subseteq P$ or $J\subseteq P$. It follows from this that every prime ideal containing $I\cap J$ must contain $\sqrt{I}\cap\sqrt{J}$. Thus, $\sqrt{I}\cap\sqrt{J}\subseteq\sqrt{I\cap J}$. The reverse inclusion is clear. \\
(iii) To see that $\sqrt{\; \; }$ is inductive, it is easier to translate this operator explicitly on the MV-algebra $A$ as the MV-algebra of functions $f$ from $X\to \sqcup_{x\in X} \L_{n_x}$. For $f\in A$, $$\sqrt{f}=\bigvee\{\alpha\in A: \alpha \; \mbox{co-atom}\; \mbox{and}\; f\leq \alpha\}$$
The above formula for $\sqrt{f}$ is derived from the fact that prime ideals of $R$ are maximal \cite[Prop. 3.13]{BD} and that the maximal ideals of $R$ correspond to the coatoms in $\prod_{x\in X}\L_{n_x}$.
Now, the co-atoms of $A$ are precisely the functions $\alpha$ for which there exists $x_0\in X$ such that $\alpha(x_0)=\frac{n_{x_0}-2}{n_{x_0}-1}$ and $\alpha(x)=1$ for all $x\neq x_0$. It follows that:
$$
\left(\sqrt{f}\right)(x)=
\begin{cases}
\frac{n_{x}-2}{n_{x}-1},\; \; \; \;  \mbox{if}\; f(x)<1\\
1,\; \; \; \; \; \; \; \; \; \mbox{if}\; \;  f(x)=1
\end{cases}
$$
We wish to show that $$\sqrt{f}=\bigvee\left\{\sqrt{\alpha}:\; \; \; \alpha\in \mathfrak{k}(A)\; \wedge \; \alpha\leq f\right\}$$
We consider two cases:\\
\underline{Case 1:} $f(x)<1$ for all $x$, then $\left(\sqrt{f}\right)(x)=\frac{n_{x}-2}{n_{x}-1}$ for all $x\in X$ . In this case, $\left(\sqrt{f}\right)(x)=\frac{n_{x}-2}{n_{x}-1}$ for all $x\in X$. For each $t\in X$, define $\alpha_t\in A$ by $\alpha_t(t)=f(t)$ and $\alpha_t(x)=0$ for all $x\neq t$. Then, $\alpha_t\in \mathfrak{k}(A)$ (Corollary \ref{compact-p}) and $\alpha_t\leq f$ for all $t\in X$. In addition $\left(\sqrt{\alpha_t}\right)(x)=\frac{n_{x}-2}{n_{x}-1}$ for all $x\in X$. It follows that $\bigvee_{t\in X}\sqrt{\alpha_t}=\sqrt{f}$ and $\bigvee\{\sqrt{\alpha}:\alpha\in \mathfrak{k}(A)\; \wedge \; \alpha\leq f\}=\sqrt{f}$. \\
\underline{Case 2:} The set $S:=\{x\in X:f(x)=1\}\ne \emptyset$. For each $s\in S$, define $\alpha_s\in A$ by $\alpha_s(s)=1$ and $\alpha_s(x)=0$ for all $x\ne s$. Then, $\alpha_t\in \mathfrak{k}(A)$ (Corollary \ref{compact-p}) and $\alpha_t\leq f$ for all $t\in X$. Moreover, $\left(\sqrt{\alpha_s}\right)(s)=1$ and $\left(\sqrt{\alpha_s}\right)(x)=\frac{n_{x}-2}{n_{x}-1}$ if $x\ne s$. It follows that $$\left(\bigvee_{s\in S}\sqrt{\alpha_s}\right)(x)=\begin{cases}
\frac{n_{x}-2}{n_{x}-1},\; \; \;  \mbox{if}\; x\notin S\\
1,\; \; \; \; \; \; \hspace{0.3cm} \mbox{if}\; \;  x\in S
\end{cases}
=\left(\sqrt{f}\right)(x)
$$
Therefore, $$\sqrt{f}=\bigvee\left\{\sqrt{\alpha}:\; \; \; \alpha\in \mathfrak{k}(A)\; \wedge \; \alpha\leq f\right\}$$.

(iv) We use the set-up from (iii). It is clear that if $n_x=2$ for all $x\in X$, then $\sqrt{0}=0$. In addition, if $n_{x_0}>2$ for some $x_0\in X$, then $\left(\sqrt{0}\right)(x_0)=\frac{n_{x_0}-2}{n_{x_0}-1}>0$. Hence, $\sqrt{0}>0$ and $\sqrt{\; \;}$ is not dense. 
\end{proof}
We would like to point out that in the preceding Proposition, the nuclear $\sqrt{A}=\prod_{x\in X}\{\frac{n_{x}-2}{n_{x}-1},1\}$. In particular, while $\sqrt{A}$ is a not a sub-MV-algebra of $A$, unless $n_x=2$ for all $x\in X$.

When $j$ is inductive and of MV-type, we have the following that provides a method of constructing algebraic MV-frames from such nuclei.
\begin{prop}\label{nuclear-p}
If $A$ is any MV-frame and $j:A\to A$ is a nucleus that is inductive of MV-type, then $jA$ is an algebraic MV-frame.
\end{prop}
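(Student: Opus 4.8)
The plan is to prove two things: first, that $jA$ is a complete MV-algebra (hence an MV-frame), and second, that every element of $jA$ is the join, computed in $jA$, of compact elements of $jA$ below it. I will use throughout that the hypothesis ``$j$ is inductive'' already presupposes, by the definition, that $A$ is an \emph{algebraic} MV-frame.

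\textbf{Step 1: $jA$ is an MV-frame.} Since $j$ is of MV-type, $jA$ is a sub-MV-algebra of $A$ (as noted just before Example~\ref{nucl}); in particular $0,1\in jA$ and $jA$ is closed under $\neg$, $\oplus$, and hence under finite joins and finite meets, which therefore agree with those of $A$. Because $j$ is a closure operator on the complete lattice $A$, its fixed-point poset $jA$ is a complete lattice: for $S\subseteq jA$, the element $\bigwedge_A S$ is again a fixed point (it lies below each $s\in S$, so $j(\bigwedge_A S)\le j(s)=s$, and the reverse inequality holds by extensivity), so $\bigwedge_A S$ is the meet of $S$ in $jA$, while $j(\bigvee_A S)$ is its join in $jA$. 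Since the order of $jA$ as a sub-MV-algebra is the one inherited from $A$, it follows that $jA$ is a complete MV-algebra, hence an MV-frame.

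\textbf{Step 2: $j(\mathfrak{k}(A))\subseteq \mathfrak{k}(jA)$.} Fix $a\in\mathfrak{k}(A)$ and suppose $j(a)\le \bigvee_{jA}S=j(\bigvee_A S)$ for some $S\subseteq jA$. Applying inductivity to $x=\bigvee_A S$ gives $j(\bigvee_A S)=\bigvee_A\{\,j(c): c\in\mathfrak{k}(A),\ c\le \bigvee_A S\,\}$, so $a\le j(a)\le \bigvee_A\{\,j(c):\dots\,\}$, and compactness of $a$ in $A$ yields $c_1,\dots,c_n\in\mathfrak{k}(A)$ with each $c_i\le\bigvee_A S$ and $a\le j(c_1)\vee\cdots\vee j(c_n)\le j(c_1\vee\cdots\vee c_n)$. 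Put $c:=c_1\vee\cdots\vee c_n$; a finite join of compact elements is compact in any frame, so $c\in\mathfrak{k}(A)$ and $c\le\bigvee_A S$, whence there is a finite $T\subseteq S$ with $c\le\bigvee_A T$. Since $T$ is finite and $jA$ is closed under finite joins, $\bigvee_A T\in jA$, so $j(a)\le j(j(c))=j(c)\le j(\bigvee_A T)=\bigvee_A T=\bigvee_{jA}T$. Thus $j(a)\in\mathfrak{k}(jA)$.

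\textbf{Step 3: $jA$ is algebraic.} For $b\in jA$, inductivity gives $b=j(b)=\bigvee_A\{\,j(a): a\in\mathfrak{k}(A),\ a\le b\,\}$; since this $A$-join equals $b\in jA$, it is also the join of the same set in $jA$. By Step~2 each $j(a)$ lies in $\mathfrak{k}(jA)$, and $j(a)\le j(b)=b$, so $b=\bigvee_{jA}\{\,x\in\mathfrak{k}(jA): x\le b\,\}$, proving that $jA$ is an algebraic MV-frame. (Combined with Theorem~\ref{algebraicMV}, this also shows $jA$ is a direct product of finite MV-chains.) The only delicate point is bookkeeping: joins in $jA$ are $j$ applied to joins in $A$, so one must pass carefully between $\bigvee_{jA}$ and $j(\bigvee_A)$, using inductivity to unfold $j(\bigvee_A S)$ into a join of images of compact elements of $A$ before invoking compactness in $A$, and using MV-type (closure under finite joins) to bring finite subjoins back inside $jA$; once this is set up the argument is routine.
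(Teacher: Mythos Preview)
Your proof is correct, but it takes a genuinely different route from the paper's. The key divergence is in how arbitrary joins in $jA$ are handled. You work throughout with the general frame-theoretic description $\bigvee_{jA}S=j(\bigvee_A S)$, which forces the careful two-stage compactness argument of Step~2: unfold $j(\bigvee_A S)$ via inductivity, extract a finite subfamily using compactness of $a$ in $A$, pass to $c=c_1\vee\dots\vee c_n$, and then use compactness of $c$ against $S$ to get a finite $T$. The paper instead observes an MV-specific shortcut (highlighted in the Remark following the proposition): since $jA$ is a sub-MV-algebra and meets in $jA$ agree with meets in $A$, the identity $\bigvee S=\neg\bigwedge\neg S$ forces \emph{arbitrary} joins in $jA$ to agree with those in $A$ as well. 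With $\bigvee_{jA}=\bigvee_A$ on $jA$, the inclusion $j(\mathfrak{k}(A))\subseteq\mathfrak{k}(jA)$ becomes a one-line observation (from $a\le j(a)\le\bigvee_A S$ and compactness of $a$), and the algebraicity chain collapses. Your argument is the one that would survive outside the MV setting; the paper's is shorter but leans essentially on the involution. It is also worth noting that the paper simply asserts $j\mathfrak{k}(A)=\mathfrak{k}(jA)$ without justification, whereas you supply a complete proof of the needed inclusion.
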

\begin{proof}
First, we observed before Example \ref{nucl} that $jA$ is a sub-MV-algebra of $A$, therefore an MV-algebra. The completeness of $jA$ is again obtained from \cite[Prop. 7.2]{PD} but in this case the meets in $jA$ and $A$ coincide. Indeed, using the relation between joins and meets in any MV-algebra \cite[Lem. 6.6.3]{CDM}(Eq. 6.8), we obtain for every nonempty subset $S\subseteq jA$, 
 \begin{align*}
\vee_jS&= \neg_j\wedge_j\neg_jS \; \; \; \; \; \; \; \mbox{by} \; (\mbox{Eq.}\; 6.8)\; \mbox{in the MV-algebra}\; jA\\
  &=\neg\wedge\neg S\; \; \; \; \; \; \; \; \mbox{as} \; jA\; \mbox{is a sub-MV-algebra and the meets in}\; jA\; \mbox{and}\; A \; \mbox{coincide}\\
  &=\vee S \; \; \; \; \; \; \; \hspace{0.5cm} \mbox{by} \; (\mbox{Eq.}\; 6.8) \; \mbox{in the MV-algebra}\; A
 \end{align*}
Therefore, $jA$ is an MV-frame, indeed a sub-MV-frame of $A$. 
 
 To see that $jA$ is algebraic, let $x\in jA$. Then,
 \begin{align*}
x&= \bigvee\left\{j(a):a\in \mathfrak{k}(A), a\leq x\right\}\; \; \; \; \; \; \; \; \; ( j\; \mbox{is inductive})\\
  &\leq \bigvee\left\{j(a):a\in \mathfrak{k}(A), j(a)\leq x\right\} \; \; \; \; \; \; \; \; \; ( j(x)=x)\\
  &\leq \bigvee\left\{b:b\in j\mathfrak{k}(A), b\leq x\right\}\\
    &\leq \bigvee\left\{b:b\in \mathfrak{k}(jA), b\leq x\right\} \; \; \; \; \; \; \; \; \; \; \; \;(j\mathfrak{k}(A)=\mathfrak{k}(jA))\\
  &\leq x
 \end{align*}
Hence, $x=\bigvee\left\{b:b\in \mathfrak{k}(jA), b\leq x\right\}$ and $jA$ is algebraic.
\end{proof}
\begin{rem}
Note that one particularity of MV-frames compare to general frames resides in the MV-frame $jA$, the joins in $jA$ coincide with those in $A$.
\end{rem}
\section{$\ell u$-Frames}
In this section, we use the Chang-Mundici equivalence between MV-algebras and abelian $\ell$-groups with strong units to consider $\ell$-groups as frames and some of their properties. This investigation is motivated in part by the fact while there exists an equivalence of categories, the frame notions such as compactness, algebraic many others are not directly categorical concepts. 

Lattice-ordered groups (or $\ell$-groups for short) are groups equipped with a lattice structure that is compatible with the group operations. 
 The only $\ell$-groups that we shall deal with are Abelian, therefore we will use the additive notation $\langle G,+,-,0\rangle$.
 We shall also use the following traditional notations: given $a\in G$, $a^+=a\vee 0$, $a^-=-a\vee 0$ and $|a|=a^-+ a^+=a\vee -a$, in particular $a^+,a^-, |a|\in G^+$.\\
 Given an Abelian $\ell$-group $G$, an element $u\in G^{+}$ is called a strong unit if for all $x\in G$, there exists an integer $n\geq 1$, such that $|x|\leq nu$.\\
 Given an abelian $\ell$-group $\langle G,+,-,0\rangle$ together with a strong unit $u\in G$ (called $\ell u$-group for short), let $\Gamma G:=[0,u]:=\{x\in G:0\leq x\leq u\}$. Then it is known that $\langle \Gamma G, \oplus, \neg, 0\rangle$ is an MV-algebra where for every $x, y\in $, $$x\oplus y=(x+y)\wedge u\; \; \mbox{and}\; \; \neg x=u-x$$
 Indeed, it is established in \cite{CM} that $\Gamma$ defines an equivalence of categories from the category $\mathbb{ABG}_u$ of $\ell u$-groups onto the category of $\mathbb{MV}$ of MV-algebras and its inverse is denoted by $\Phi$.
 
Consider a complete $\ell$-group $G$ (i.e., every nonempty bounded subset of $G$ has a l.u.b and a g.l.b) and $u\in G$ is a strong unit. Then, $G$ satisfies the infinite distributivity law \cite[F.3]{LJMW}, that is: $\wedge$ distributes over all suprema that exists. For this reason, we shall refer to any pair $\langle G, u\rangle$, where $G$ is a complete $\ell$-group and $u\in G$ is a strong as an $\ell u$-frame. In addition, if the frame $G$ has a property $(P)$ (for e.g., algebraic, regular,...etc), we will say that the $\ell u$-frame has property $(P)$. Given $\langle G_1, u_1\rangle$ and $\langle G_2, u_2\rangle$, two $\ell u$-frames, a homomorphism from $\langle G_1, u_1\rangle \to \langle G_2, u_2\rangle$ is both a group and lattice homomorphism from $G_1to G_2$ that preserves arbitrary suprema that exists and maps $u_1$ to $u_2$. It is clear that MV-frames and frame homomorphisms form a category that shall be denoted by $\mathbf{MV}$-$\mathbf{Frm}$ and the $\ell u$-frames and their homomorphisms form a category that shall be denoted by $\ell u$-$\mathbf{Frm}$.  We seek to use the categorical equivalence described above to characterize some classes of $\ell u$-frames. 

Given a lattice-ordered group $G$ and $x\in G$, define $M(x):=\{a\in G: x\leq a\}$ and $L(x):=\{a\in G: a\leq x\}$. The \text{interval topology} is the topology generated by taking all of the
sets $\{M(x), L(x): x\in G\}$, as a subbasis for the closed sets.
\begin{prop}\label{mv=lu}
\begin{itemize}
\item[1.] For every $\ell u$-frame, $\Gamma G$ is an MV-frame.
\item[2.] For every MV-frame $A$, $\Phi(A)$ is an $\ell u$-frame.
\end{itemize}
\end{prop}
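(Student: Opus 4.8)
The plan is to notice that in both directions everything except completeness is handed to us by the Chang--Mundici equivalence together with facts already recorded in the paper: every complete MV-algebra is a frame by \cite[Lem. 6.6.4]{CDM}, and every complete $\ell$-group with a strong unit satisfies the infinite distributive law. For (1), write $\Gamma G=[0,u]$ with $x\oplus y=(x+y)\wedge u$ and $\neg x=u-x$; by \cite{CM} this is an MV-algebra, and a short computation shows that its underlying order and lattice operations are exactly the restrictions of those of $G$ (for instance $\neg x\oplus y=u$ iff $x\le y$ in $G$). It then suffices to check that $[0,u]$ is a complete lattice: any nonempty $S\subseteq[0,u]$ is bounded in $G$, between $0$ and $u$, hence has a supremum $\sigma$ and an infimum in $G$ because $G$ is a complete $\ell$-group; since $u$ is an upper bound and any $s_0\in S$ satisfies $0\le s_0\le\sigma$, we get $\sigma\in[0,u]$, and dually for the infimum. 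So these are the join and meet of $S$ in $[0,u]$, $\Gamma G$ is a complete MV-algebra, and therefore an MV-frame.

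For (2), set $G:=\Phi(A)$ with distinguished unit $u$, so that, up to isomorphism, $A=\Gamma G=[0,u]$ and $u$ is a strong unit of $G$ by construction; the only point to prove is that $G$ is a complete $\ell$-group. It suffices to show that every nonempty subset of an interval $[0,nu]$ ($n\ge 1$) has a least upper bound in $G$. Indeed, a nonempty bounded $S\subseteq G$ has a lower bound $d$ and an upper bound $c$, so $S-d\subseteq G^{+}$ is bounded above by $c-d$ and hence, by the strong-unit property, lies in some $[0,nu]$; then $\sup S=\sup(S-d)+d$ exists, and infima exist too since $\inf T=-\sup(-T)$ with $-T$ bounded whenever $T$ is.

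The heart of the argument is to reduce suprema in $[0,nu]$ to suprema in $[0,u]=A$, which is complete. For $k=1,\dots,n$ define $f_k\colon[0,nu]\to[0,u]$ by $f_k(g)=\big((g-(k-1)u)\vee 0\big)\wedge u$. With $h_k:=(g-(k-1)u)^{+}$, the $\ell$-group identities $x\wedge u=x-(x-u)^{+}$ and $(x^{+}-u)^{+}=(x-u)^{+}$ give $f_k(g)=h_k-h_{k+1}$, so that $\sum_{k=1}^{n}f_k(g)=h_1-h_{n+1}=g$ for every $g\in[0,nu]$; moreover each $f_k$ is order-preserving. Now let $S\subseteq[0,nu]$ be nonempty, put $b_k:=\bigvee_{s\in S}f_k(s)\in[0,u]$ (this join exists since $A$ is complete), and set $\sigma:=\sum_{k=1}^{n}b_k\in[0,nu]$. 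Then $\sigma$ is an upper bound of $S$, because $s=\sum_k f_k(s)\le\sum_k b_k=\sigma$ for all $s\in S$; and if $v$ is any upper bound of $S$ in $G$, then $v\wedge nu\in[0,nu]$ (it is $\ge s_0\ge 0$ for $s_0\in S$), so $f_k(v\wedge nu)\ge f_k(s)$ for all $s$, whence $f_k(v\wedge nu)\ge b_k$ and $v\ge v\wedge nu=\sum_k f_k(v\wedge nu)\ge\sigma$. Hence $\sigma=\sup_G S$; $G$ is a complete $\ell$-group, and $\Phi(A)$ is an $\ell u$-frame.

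The main obstacle is exactly this last reduction: in (1) the MV-algebra $\Gamma G$ sits inside $G$ and inherits completeness for free, whereas in (2) one must propagate completeness of the ``unit interval'' $[0,u]$ out to the entire group, that is, through all the intervals $[0,nu]$. The device that does this is the good-sequence decomposition $g=\sum_{k=1}^{n}f_k(g)$, which expresses the order of $[0,nu]$ coordinatewise in terms of that of $[0,u]$; once it is available, the remaining work is only routine manipulation of $\ell$-group identities.
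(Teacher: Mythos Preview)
Your argument is correct. Part~1 is identical to the paper's. For part~2 you take a genuinely different route from the paper in the step that propagates completeness from $[0,u]$ to $G$. The paper also reduces to bounded families $(g_x)\subseteq G^{+}$ with $g_x\le Nu$, but then invokes a \emph{divisibility} device: for each $x$ it forms $b_x:=\bigvee\{a\in[0,u]:Na\le g_x\}$ (the ``$g_x/N$'' in $[0,u]$, which exists because $A$ is complete), sets $b:=\bigvee_x b_x$, and appeals to \cite[Lem.~2]{Wright} together with \cite[Lem.~2]{CH} to conclude that $Nb=\sup_G(g_x)$. By contrast, you use the good-sequence decomposition $g=\sum_{k=1}^{n}f_k(g)$ with $f_k(g)=\bigl((g-(k-1)u)\vee 0\bigr)\wedge u$, take the suprema of the $n$ slices inside $[0,u]$, and add them back. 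Your approach is more self-contained (no external lemmas from \cite{CH,Wright} are needed, only elementary $\ell$-group identities such as $x\wedge u=x-(x-u)^{+}$) and is closer in spirit to the Chang--Mundici machinery; the paper's approach is terser but leans on cited results about divisibility in ordered groups. Both arrive at the same conclusion by projecting the problem down to the complete interval $[0,u]$---the paper via a single multiplicative scaling, you via an additive slicing.
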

\begin{proof}
1. Assume that $\langle G, u\rangle$ is an $\ell u$-frame. Let $(g_x)_{x\in X}\subseteq \Gamma G$, then $(g_x)_{x\in X}$ is bounded and as $G$ is complete, then $\bigvee_{x\in X}g_x$ and $\bigwedge_{x\in X}g_x$ exist in $G$. Clearly $\bigvee_{x\in X}g_x, \bigwedge_{x\in X} \in \Gamma G$ and $\bigvee_{x\in X}g_x$ (resp. $\bigwedge_{x\in X}$) is the supremum (resp. the infimum) of $(g_x)_{x\in X}$ in $\Gamma G$. \\
2. Let $A$ be an MV-frame. Then, $A=\Gamma G$, for some $\ell u$-group $\langle G, u\rangle$. We need to prove that $G$ is complete under the assumption that $[0,u]$ is complete. 
Using \cite[Lem. 2]{CH}, we show that every bounded set $(g_x)_{x\in X}\subseteq G^{+}$ has a g.l.b. Let $g\in G$ such that $g_x\leq g$ for all $x\in X$. There exists $n\geq 1$ integer such that $g\leq nu$. Thus, there exists an integer $n\geq 1$ such that $g_x\leq nu$ for all $x\in X$. Choose the smallest integer $N\geq 1$ with the preceding property. For each $x\in X$, as $[0,u]$ is complete, let $b_x:=\bigvee\{a\in[0,u]:Na\leq g_x\}$ and let $b=\bigvee_{x\in X}b_x$. It can be shown using \cite[Lem. 2]{Wright} that $Nb$ is the l.u.b of $(g_x)_{x\in X}$. Therefore, $G$ is complete by \cite[Lem. 2]{CH}.
\end{proof}
\begin{prop}\label{morphisms}
\begin{itemize}
\item[1.] Given $\langle G_1, u_1\rangle$ and $\langle G_2, u_2\rangle$, two $\ell u$-frames and $f: \langle G_1, u_1\rangle \to \langle G_2, u_2\rangle$ a morphism, $\Gamma(f):\Gamma G_1\to \Gamma G_2$ is frame homomorphism.
\item[2.] Given $A, B$ two MV-frames and a frame homomorphism $\phi:A\to B$, $\Phi(\phi): \Phi(A)\to \Phi(B)$ is a homomorphism of $\ell u$-frames.
\end{itemize}
\end{prop}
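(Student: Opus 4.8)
The approach is to use that $\Gamma$ and $\Phi$ are already functors between $\mathbb{ABG}_u$ and $\mathbb{MV}$ (\cite{CM}), so that $\Gamma(f)$ is automatically an MV-homomorphism (hence a lattice homomorphism) and $\Phi(\phi)$ is automatically a unital $\ell$-group homomorphism (a group and lattice homomorphism carrying $u_A$ to $u_B$) whose restriction to $\Gamma\Phi(A)=A$ is $\phi$. In both parts the only point left to verify is the behaviour on \emph{arbitrary} suprema.

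For (1), given $(a_x)_{x\in X}\subseteq\Gamma G_1$, the supremum $\bigvee_x a_x$ formed in $\Gamma G_1$ agrees with the one formed in $G_1$; this is exactly the observation recorded in the proof of Proposition \ref{mv=lu}(1), that $[0,u_1]$ inherits suprema from $G_1$. Since $f$ preserves suprema that exist in $G_1$, we get $f(\bigvee_x a_x)=\bigvee_x f(a_x)$, and the right-hand side, being a supremum of elements of $[0,u_2]$, is the supremum in $\Gamma G_2$. With $\Gamma(f)(0)=f(0)=0$ for the empty join, this shows $\Gamma(f)$ preserves all joins, hence is a frame homomorphism.

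For (2), write $\Phi(A)=\langle G,u_A\rangle$ and $\Phi(B)=\langle H,u_B\rangle$, where $H$ is complete by Proposition \ref{mv=lu}(2). Only preservation of existing suprema by $\Phi(\phi):G\to H$ must be checked, and translating by a lower bound reduces this to a bounded family $(g_x)_{x\in X}\subseteq G^{+}$ with supremum $g$; since $u_A$ is a strong unit we fix $N\ge 1$ with $g\le Nu_A$, so $g_x\le Nu_A$ for all $x$. The device is the layer decomposition on $[0,Nu_A]$: set $\lambda_k(z):=(z\wedge ku_A)-(z\wedge(k-1)u_A)\in[0,u_A]=A$ for $k=1,\dots,N$. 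A short $\ell$-group computation gives $\lambda_k(z)=(z-(k-1)u_A)^+\wedge u_A$ and $z=\sum_{k=1}^N\lambda_k(z)$, so an element of $[0,Nu_A]$ is determined by its layers; and the first formula, together with translation invariance of suprema and the infinite distributive law available in the $\ell u$-frame $G$ (Section 4), yields $\lambda_k(g)=\bigvee_x\lambda_k(g_x)$ for every $k$ (a supremum of elements of $A$, hence also the supremum in $A$). Since $\Phi(\phi)$ is a unital $\ell$-group homomorphism it commutes with each $\lambda_k$, and as it restricts to $\phi$ on $A$ we obtain $\lambda_k(\Phi(\phi)(z))=\phi(\lambda_k(z))$ for $z\in[0,Nu_A]$. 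Now $(\Phi(\phi)(g_x))_x$ is bounded by $Nu_B$, so its supremum exists in $H$ and lies in $[0,Nu_B]$, and computing its $k$-th layer by the same formula while using that $\phi$ preserves joins gives
$$\lambda_k\Big(\bigvee_x\Phi(\phi)(g_x)\Big)=\bigvee_x\phi(\lambda_k(g_x))=\phi(\lambda_k(g))=\lambda_k(\Phi(\phi)(g)).$$
Since $\bigvee_x\Phi(\phi)(g_x)$ and $\Phi(\phi)(g)$ both lie in $[0,Nu_B]$ and have equal layers $\lambda_1,\dots,\lambda_N$, they coincide, i.e.\ $\Phi(\phi)$ preserves $\bigvee_x g_x$.

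The genuine work is the join-preservation in part (2): a homomorphism of complete $\ell u$-groups need not, a priori, respect infinite suprema, and the argument succeeds only because the layer maps $\lambda_k$ convert suprema in $[0,Nu_A]$ into suprema in $A$ — where $\phi$ is known to behave well — and conversely recover an element of $[0,Nu_A]$ from its layers. Part (1) and the functoriality inputs from \cite{CM} are routine; a small point to keep honest is the uniform choice of $N$ for the whole family and the verification, via Proposition \ref{mv=lu}(2), that the suprema in $H$ actually exist.
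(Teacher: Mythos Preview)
Your argument is correct. Part~(1) is handled exactly as in the paper: $\Gamma(f)$ is the restriction of $f$, it is an MV-homomorphism by functoriality, and preservation of joins follows because suprema in $[0,u_i]$ are computed in $G_i$.

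For part~(2) you take a genuinely different route. The paper reuses the explicit description of the supremum obtained in Proposition~\ref{mv=lu}(2) via \cite[Lem.~2]{Wright}: it writes $\bigvee_x g_x = N\bigvee_x b_x$ with $b_x=\bigvee\{a\in[0,u]:Na\le g_x\}$, pushes $\varphi=\Phi(\phi)$ through this formula using that $\phi$ preserves the inner joins, and derives the inequality $\varphi(\bigvee_x g_x)\le\bigvee_x\varphi(g_x)$ (the reverse being automatic). Your approach instead uses the good-sequence/layer decomposition $\lambda_k(z)=(z\wedge ku)-(z\wedge(k-1)u)$ on $[0,Nu]$: you show that each $\lambda_k$ commutes with suprema (translation invariance plus infinite distributivity) and with $\Phi(\phi)$ (it is built from $\wedge$, $-$, and $u$), and that the layers determine the element via the telescoping sum. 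This yields the \emph{equality} directly, without invoking Wright's lemma or the somewhat implicit formula for $\bigvee_x g_x$ used in the paper. The trade-off is that the paper's proof recycles machinery already set up in Proposition~\ref{mv=lu}(2), while yours is self-contained and arguably more transparent, relying only on standard $\ell$-group identities and the Mundici good-sequence idea.
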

\begin{proof}
1. Let $f: \langle G_1, u_1\rangle \to \langle G_2, u_2\rangle$ a morphism. Define $\Gamma(f) :[0,u_1]\to[0,u_2]$ to be the restriction of $f$ to $[0,u_1]$, then $\Gamma(f)$ is a well-defined MV-homomorphism. In addition, since the suprema in $[0,u_i]$ are from $G_i$ ($i=1,2$) and $f$ preserves the suprema, then $\Gamma(f)$ preserves the suprema. Thus, $\Gamma(f)$ is a frame homomorphism. \\
2. Let $A, B$ be two MV-frames and a frame homomorphism $\varphi:A\to B$. It is known by Proposition \ref{mv=lu} that there exists two $\ell u$-frames $\langle G_1, u_1\rangle$, $\langle G_2, u_2\rangle$ and a morphism $\varphi:\langle G_1, u_1\rangle\to \langle G_1, u_1\rangle$ such that $A=[0,u_1], B=[0,u_2]$ and $\phi=\varphi_{|[0,u_1]}$. To show that $\varphi=\Phi(\phi)$ preserves suprema, we will use the notations and the description of the supremum in the proof of Proposition \ref{mv=lu}(2). Let $(g_x)_{x\in X}\subseteq G^{+}$ be a bounded subset of $G$,
\begin{align*}
  \varphi\left(\bigvee_x g_x\right)&=  \varphi\left(N\bigvee_x b_x\right)
  = N\phi\left(\bigvee_x b_x\right)
  = N\bigvee_x \phi(b_x)\\
  &=N\bigvee_x \phi\left(\bigvee\{a\in[0,u]:Na\leq g_x\}\right)&&\\
  &=N\bigvee_x \left(\bigvee\{\phi(a)\in[0,u]:Na\leq g_x\}\right)\\
    &=N\bigvee_x \left(\bigvee\{\varphi(a)\in[0,u]:Na\leq g_x\}\right)\\
  &\leq N\bigvee_x \left(\bigvee\{a'\in[0,u]:Na'\leq \varphi(g_x)\}\right)\\
  &=\bigvee_x\varphi(g_x)
 \end{align*}
 Note that inequality $\bigvee_x\varphi(g_x)\leq  \varphi\left(\bigvee_x g_x\right)$ is always true. Thus, $\bigvee_x\varphi(g_x)= \varphi\left(\bigvee_x g_x\right)$. For the general case, if $(g_x)_x$ is a bounded set in $G$ and $z$ is a lower bound of $(g_x)_x$, then $(g_x-z)_x\subseteq G^{+}$ and is bounded. It follows from the preceding step that:\\ $\varphi\left(\bigvee_xg_x\right)-\varphi(z)=\varphi\left(\bigvee_xg_x-z\right)=\varphi\left(\bigvee_x(g_x-z)\right)=\bigvee_x\varphi(g_x-z)=\bigvee\left(\varphi(g_x)-\varphi(z)\right)=\bigvee\varphi(g_x)-\varphi(z)$. From this, one obtains that $\varphi\left(\bigvee_xg_x\right)=\bigvee\varphi(g_x)$.
\end{proof}
\begin{thm}\label{mvf=luf}
The categories $\mathbf{MV}$-$\mathbf{Frm}$ of MV-frames and $\ell u$-$\mathbf{Frm}$ of $\ell u$-frames are categorically equivalent. 
\end{thm}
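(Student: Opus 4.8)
The plan is to show that the Chang--Mundici functors $\Gamma$ and $\Phi$ cut down to the subcategories in question and that the unit and counit of the Chang--Mundici equivalence cut down to natural isomorphisms between the restricted functors. Most of this has already been arranged. Proposition~\ref{mv=lu} shows that $\Gamma$ carries an $\ell u$-frame to an MV-frame and $\Phi$ carries an MV-frame to an $\ell u$-frame, so both functors are well defined on objects; Proposition~\ref{morphisms} shows that $\Gamma$ carries a morphism of $\ell u$-frames to a frame homomorphism and $\Phi$ carries a frame homomorphism to a morphism of $\ell u$-frames, so both are well defined on morphisms. Preservation of identities and of composition is inherited verbatim from the functoriality of $\Gamma\colon\mathbb{ABG}_u\to\mathbb{MV}$ and $\Phi\colon\mathbb{MV}\to\mathbb{ABG}_u$ from \cite{CM}, because on morphisms $\Gamma$ is nothing but restriction to the unit interval and $\Phi$ is the inverse assignment. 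Hence we obtain functors $\Gamma\colon\ell u\text{-}\mathbf{Frm}\to\mathbf{MV}\text{-}\mathbf{Frm}$ and $\Phi\colon\mathbf{MV}\text{-}\mathbf{Frm}\to\ell u\text{-}\mathbf{Frm}$.

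It then remains to restrict the natural isomorphisms. Let $\eta\colon 1_{\mathbb{MV}}\Rightarrow\Gamma\Phi$ and $\varepsilon\colon\Phi\Gamma\Rightarrow 1_{\mathbb{ABG}_u}$ be the natural isomorphisms of the Chang--Mundici equivalence. For an MV-frame $A$ the component $\eta_A\colon A\to\Gamma\Phi(A)$ is an MV-algebra isomorphism, in particular an order isomorphism, so it preserves every join that exists; the same holds for $\eta_A^{-1}$, so $\eta_A$ is an isomorphism in $\mathbf{MV}\text{-}\mathbf{Frm}$. Symmetrically, for an $\ell u$-frame $\langle G,u\rangle$ the component $\varepsilon_{\langle G,u\rangle}$ is an $\ell u$-group isomorphism, hence an order isomorphism preserving all existing suprema and fixing the strong unit, so it is an isomorphism in $\ell u\text{-}\mathbf{Frm}$. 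The naturality squares over the subcategories are instances of the naturality squares over $\mathbb{MV}$ and $\mathbb{ABG}_u$, so they commute. Thus $\eta$ and $\varepsilon$ restrict to natural isomorphisms $1_{\mathbf{MV}\text{-}\mathbf{Frm}}\cong\Gamma\Phi$ and $\Phi\Gamma\cong 1_{\ell u\text{-}\mathbf{Frm}}$, which is the asserted equivalence.

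The one point that is not purely formal, and which I would spell out, is the observation used twice above: a bijective MV-homomorphism between complete MV-algebras (resp.\ an $\ell u$-group isomorphism between complete $\ell u$-groups) automatically preserves arbitrary existing joins, simply because it is an order isomorphism. This matters precisely because $\mathbf{MV}\text{-}\mathbf{Frm}$ and $\ell u\text{-}\mathbf{Frm}$ are \emph{not} full subcategories of $\mathbb{MV}$ and $\mathbb{ABG}_u$ --- their morphisms carry the extra requirement of preserving suprema --- so it is a priori conceivable that $\eta_A$ or $\varepsilon_{\langle G,u\rangle}$ fail to be morphisms there; the order-isomorphism remark rules this out. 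After that, the proof is bookkeeping imported from \cite{CM}.
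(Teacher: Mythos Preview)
Your proof is correct and follows essentially the same strategy as the paper: restrict the Chang--Mundici equivalence to the frame subcategories using Propositions~\ref{mv=lu} and~\ref{morphisms}. The paper's version is considerably terser---after noting that $\Gamma$ and $\Phi$ restrict to functors between $\ell u\text{-}\mathbf{Frm}$ and $\mathbf{MV}\text{-}\mathbf{Frm}$, it simply declares the equivalence ``clear''---whereas you explicitly verify that the components $\eta_A$ and $\varepsilon_{\langle G,u\rangle}$ of the unit and counit live in the (non-full) subcategories, which is a worthwhile point to make precise.
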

\begin{proof}
Recall \cite{CM} that $\Gamma$ defines an equivalence of categories from the category $\mathbb{ABG}_u$ of $\ell u$-groups onto the category of $\mathbb{MV}$ of MV-algebras and its inverse is denoted by $\Phi$. In addition, by Proposition \ref{mv=lu} and Proposition \ref{morphisms}, the restriction of $\Gamma$ to $\ell u$-$\mathbf{Frm}$ is a functor onto $\mathbf{MV}$-$\mathbf{Frm}$ and the restriction of $\Phi$ to $\mathbf{MV}$-$\mathbf{Frm}$ is also a functor onto $\ell u$-$\mathbf{Frm}$. It is therefore clear that $\Gamma$ is an equivalence from $\ell u$-$\mathbf{Frm}\to \mathbf{MV}$-$\mathbf{Frm}$ 
\end{proof}
\begin{definition} Let $\langle G, u\rangle$ be an $\ell u$-frame and $A:=\Gamma(\langle G, u\rangle)$. 
\begin{enumerate}
\item An element $g\in G$ is called compact if $|g|\wedge u$ is a compact element of the MV-frame $\Gamma(\langle G, u\rangle)$. That is,
$$\mathfrak{k}(\langle G, u\rangle)=\{g\in G: |g|\wedge u\in \mathfrak{k}(A)\}$$
\item The $\ell u$-frame $\langle G, u\rangle$ is algebraic if for every $g\in G$, $g=\bigvee\{a\in \mathfrak{k}(\langle G, u\rangle): a\leq g\}$.
\end{enumerate}
\end{definition}
We obtain the following characterization of algebraic $\ell u$-frames.
 \begin{thm}\label{algebraicluf}
 Algebraic $\ell u$-frames are up to isomorphism of the form $\langle \mathbb{Z}^X, \mathbf{n}\rangle$ for some set $X$, where $\mathbf{n}$ is a sequence of positive integers..
 \end{thm}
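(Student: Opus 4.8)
The plan is to push the problem through the Chang--Mundici equivalence $\Gamma\dashv\Phi$ (Theorem~\ref{mvf=luf}). First I would prove that $\langle G,u\rangle$ is an algebraic $\ell u$-frame if and only if $A:=\Gamma G$ is an algebraic MV-frame; Theorem~\ref{algebraicMV} then gives $A\cong\prod_{x\in X}\L_{n_x}$ for a set $X$ and integers $n_x\ge 2$, and it remains to transport this back across $\Phi$, using $\Phi(\L_n)=\langle\mathbb{Z},n-1\rangle$ and the fact that an equivalence preserves products. The genuinely new ingredients are a short frame lemma and an argument — applied at the point $-u$, which lies outside the MV-interval $[0,u]$ — that forces the index set to be finite.

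\emph{Forward direction.} Let $\langle G,u\rangle$ be an algebraic $\ell u$-frame and $A=\Gamma G=[0,u]$. For $a\in[0,u]$ one has $|a|\wedge u=a$, so unwinding the definition of compactness gives $\mathfrak{k}(A)=\mathfrak{k}(\langle G,u\rangle)\cap[0,u]$. To see $A$ is algebraic, fix $a\in A$; by hypothesis $a=\bigvee_G\{g\in\mathfrak{k}(\langle G,u\rangle):g\le a\}$, and since $a\ge 0$, applying $(-)\vee 0$ to that family yields $a=\bigvee_G\{g^{+}:g\in\mathfrak{k}(\langle G,u\rangle),\ g\le a\}$ with $g^{+}:=g\vee 0\in[0,u]$ (as $0\le g^{+}\le a\le u$), where joins in $[0,u]$ are computed in $G$. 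I would then invoke the elementary fact that \emph{in any frame, if $c=p\vee q$ is compact and $p\wedge q=0$, then $p$ is compact}: indeed $p\le\bigvee S$ gives $c\le\bigvee(S\cup\{q\})$, hence $c\le\bigvee F$ for some finite $F\subseteq S\cup\{q\}$, and finite distributivity together with $p\wedge q=0$ gives $p=p\wedge\bigvee F\le\bigvee(F\setminus\{q\})$. Apply this with $c=|g|\wedge u\in\mathfrak{k}(A)$, $p=g^{+}\wedge u=g^{+}$, $q=g^{-}\wedge u$: these are disjoint since $p\wedge q\le g^{+}\wedge g^{-}=0$, and $p\vee q=(g^{+}\vee g^{-})\wedge u=|g|\wedge u=c$ by lattice distributivity; hence $g^{+}\in\mathfrak{k}(A)$. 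Thus $a$ is a join of compact elements of $A$ below it, so $A$ is an algebraic MV-frame, and by Theorem~\ref{algebraicMV} we may write $A\cong\prod_{x\in X}\L_{n_x}$.

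\emph{Finiteness of $X$, and conclusion.} Now evaluate the algebraicity equation of $\langle G,u\rangle$ at $-u\in G$: any $g\in\mathfrak{k}(\langle G,u\rangle)$ with $g\le -u$ satisfies $-g\ge u$, so $|g|\wedge u=(-g)\wedge u=u$, which forces the top element $u$ of $A$ to be compact, i.e.\ the all-ones function of $\prod_{x\in X}\L_{n_x}$ to lie in $\bigoplus_{x\in X}\L_{n_x}$ (Corollary~\ref{compact-p}); that happens precisely when $X$ is finite. Were $X$ infinite, no such $g$ would exist, so $\{a\in\mathfrak{k}(\langle G,u\rangle):a\le -u\}=\emptyset$ and the equation would read $-u=\bigvee_G\emptyset$, impossible since a nontrivial $\ell$-group has no least element. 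Hence $X$ is finite, $A\cong\prod_{i=1}^{k}\L_{n_i}$ is a finite MV-algebra, and through the equivalence $G\cong\Phi(A)\cong\prod_{i=1}^{k}\Phi(\L_{n_i})=\prod_{i=1}^{k}\langle\mathbb{Z},n_i-1\rangle=\langle\mathbb{Z}^{k},(n_1-1,\dots,n_k-1)\rangle$, using $\Phi(\L_n)=\langle\mathbb{Z},n-1\rangle$ and preservation of finite products. With $\mathbf{n}=(n_1-1,\dots,n_k-1)$, a finite sequence of positive integers, $\langle G,u\rangle$ has the asserted form.

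\emph{Converse, and the main obstacle.} For finite $X$ and $\mathbf{n}$ a sequence of positive integers, $\Gamma(\langle\mathbb{Z}^{X},\mathbf{n}\rangle)\cong\prod_{x\in X}\L_{n_x+1}$ is a finite MV-algebra, hence a coherent (in particular algebraic) MV-frame, and since $|g|\wedge\mathbf{n}$ always lies in this finite algebra we get $\mathfrak{k}(\langle\mathbb{Z}^{X},\mathbf{n}\rangle)=\mathbb{Z}^{X}$, so $g=\bigvee\{a\in\mathfrak{k}:a\le g\}$ trivially and $\langle\mathbb{Z}^{X},\mathbf{n}\rangle$ is an algebraic $\ell u$-frame. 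I expect the forward direction to be the delicate part: the frame lemma is exactly what lets one trim a compact $g$ below $a$ to $g^{+}$ without leaving $\mathfrak{k}(A)$, and the evaluation at $-u$ is indispensable, since dropping it would make the equivalence misleadingly suggest that every product $\prod_{x\in X}\L_{n_x}$ yields an algebraic $\ell u$-frame — which fails for infinite $X$.
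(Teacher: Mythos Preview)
Your proof is correct and diverges from the paper's in two substantive ways, both to its advantage.

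For the forward direction, the paper asserts the inclusion $\{|a|\wedge u:a\in\mathfrak{k}(\langle G,u\rangle),\ a\le x\}\subseteq\{b\in\mathfrak{k}(A):b\le x\}$ and uses it to sandwich $x$. That inclusion fails whenever $a$ has a large negative part: in $\langle\mathbb{Z},2\rangle$ with $x=1$, the element $a=-5$ is compact and $a\le x$, yet $|a|\wedge u=2\not\le 1$. Your route through $g^{+}=g\vee 0$, together with the decomposition $|g|\wedge u=(g^{+}\wedge u)\vee(g^{-}\wedge u)$ into disjoint pieces and the frame lemma that a disjoint summand of a compact element is compact, repairs this cleanly: it discards the negative part before comparing with $x$, so that $g^{+}\in\mathfrak{k}(A)$ and $g^{+}\le x$ genuinely hold.

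Your finiteness step uncovers a real gap in the paper's converse. The paper claims $\langle\mathbb{Z}^{X},\mathbf{n}\rangle$ is algebraic for arbitrary $X$, citing $\mathfrak{k}(\langle\mathbb{Z}^{X},\mathbf{n}\rangle)=\bigoplus_{x\in X}\mathbb{Z}$. But under the paper's own definition, the algebraicity equation must hold at \emph{every} $g\in G$, and your evaluation at $g=-\mathbf{n}$ shows that for infinite $X$ no element of $\bigoplus_{x\in X}\mathbb{Z}$ lies below $-\mathbf{n}$ (any such $a$ would have $|a|\wedge\mathbf{n}=\mathbf{n}\notin\mathfrak{k}(A)$), so the required join is over the empty set in a lattice with no bottom element. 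Hence the correct characterization is that algebraic $\ell u$-frames are precisely the $\langle\mathbb{Z}^{X},\mathbf{n}\rangle$ with $X$ \emph{finite}, which is exactly what you prove; the theorem as stated should be sharpened accordingly.
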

 \begin{proof}
 Let $\langle G, u\rangle$ be an algebraic $\ell u$-frame and let $A:=\Gamma(\langle G, u\rangle)$. We prove that $A$ is an algebraic MV-frame. Indeed, let $x\in A$, then $x\in G$ and since $\langle G, u\rangle$ is algebraic, then $x=\bigvee\{a\in \mathfrak{k}(\langle G, u\rangle): a\leq x\}$. Note that for every $a\in \mathfrak{k}(\langle G, u\rangle)$ such that $a\leq x$, $a\leq |a|\wedge u$, since $a\leq |a|$ and $a\leq x\leq u$. Hence, $\bigvee\{a\in \mathfrak{k}(\langle G, u\rangle): a\leq x\}\leq \bigvee\{|a|\wedge u: a\in \mathfrak{k}(\langle G, u\rangle)\; \mbox{and}\; a\leq x\}$. But, as $\{|a|\wedge u: a\in \mathfrak{k}(\langle G, u\rangle)\; \mbox{and}\; a\leq x\}\subseteq \{b\in \mathfrak{k}(A):b\leq x\}$, then $\bigvee\{|a|\wedge u: a\in \mathfrak{k}(\langle G, u\rangle)\; \mbox{and}\; a\leq x\}\leq \bigvee\{b\in \mathfrak{k}(A):b\leq x\}\leq x$. Therefore, $x=\bigvee\{a\in \mathfrak{k}(\langle G, u\rangle): a\leq x\}\leq \bigvee\{|a|\wedge u: a\in \mathfrak{k}(\langle G, u\rangle)\; \mbox{and}\; a\leq x\}\leq \bigvee\{b\in \mathfrak{k}(A):b\leq x\}\leq x$. Thus, $x=\bigvee\{b\in \mathfrak{k}(A):b\leq x\}$ and $A$ is algebraic. It follows from Theorem \ref{algebraicMV} that $A\cong \prod_{x\in X}\L_{n_x}$, for some nonempty set $X$ and a set of positive integers $(n_x)_{x\in X}$. Thus, $\langle G, u\rangle\cong \Phi(A)\cong \Phi\left(\prod_{x\in X}\L_{n_x}\right)\cong \prod_{x\in X}\Phi\left(\L_{n_x}\right)\cong \prod_{x\in X}\langle \mathbb{Z}, n_x-1\rangle\cong \langle \mathbb{Z}^X, \mathbf{n}\rangle$, where $\mathbf{n}=(n_x-1)_{x\in X}$. In addition, one can verify that $\mathfrak{k}\left(\mathbb{Z}^X, \mathbf{n}\rangle\right)=\oplus_{x\in X} \mathbb{Z}$ and use this to show that $ \langle \mathbb{Z}^X, \mathbf{n}\rangle$ is algebraic. 
 \end{proof}
 \section{Conclusion and final remarks}
 In this introductory work, we introduce MV-frames and their nuclei as well as $\ell u$-frames. We also investigated some of the main frame concepts in the framework of MV-algebras and abelian lattice-ordered groups. We completely described algebraic MV-frames ($\ell u$-frames), coherent MV-frames and regular MV-frames among algebraic ones. We also studied nuclei on MV-frames, especially on the MV-frame of ideals of \L ukasiewicz rings. In addition, we studied the nuclear of nuclei and under certain conditions obtained them as algebraic sub-MV-frames. We anticipate that some of our future works will look into deepening some of the areas introduced here. For instance, we would like to find out what new properties of \L ukasiewicz rings can be discovered from the properties of the nuclei on their MV-frames of ideals. 
 

\begin{thebibliography}{aaaaa}
 \bibitem{BANASCH1.96} B. Banaschewski.: Radical ideals and coherent frames, Commentationnes Mathematicae Universitatis Carolinae, {\bf 37}(2) (1996) 349--370.	
\bibitem{BBBANASCH3.2003} B. Banaschewski.: Gelfand and Exchange rings: Their Spectra in poinfree Topology. Arab, j. Sci. Eng. {\bf 25}, (2003)3-22
 \bibitem{BD}L. P. Belluce, A. Di Nola.: Commutative rings whose ideals form an MV-algebra, Math. Logic Quart., {\bf 55}, 2009)468--486.
 \bibitem{CH} T. H. Choe.: The interval topology of a lattice-ordered group, Kyungpook Math. J. {\bf 2}(1959)69--74
 \bibitem{CDM} R. Cignoli, I. D'Ottaviano, D. Mundici.: Algebraic foundations of many-valued reasoning,
 \textit{Kluwer Academic, Dordrecht}(2000).
 \bibitem{CM} R. Cignoli, D. Mundici.: An elementary presentation of the equivalence between MV-algebras and $\ell$-groups with strong units, \textit{Studia Logica}, special issue on Many-valued logics {\bf 61}(1998) 49-64.
 \bibitem{Dube} T. Dube.: First steps going down on algebraic frames, Hacettepe J. of Math \& Stats, {\bf 48} (6) (2019), 1792--1807.
 \bibitem{DB} T. Dube and S. Blose.: Algebraic frames in which dense elements are above dense compact elements, Algebra Univers.(2023) 84:3, https://doi.org/10.1007/s00012-022-00799-w
\bibitem{EP} N. Epstein.: A guide to closure operations in commutative algebra, in: Progress in Commutative Algebra, {\bf 2}, 2012, pp.1--37.
  \bibitem{Jo} P. T. Johnstone.: Stone spaces, Cambridge University Press, Cambridge (1982).
\bibitem{MZ1} J. Mart\'{i}nez, E.R. Zenk.: When an algebraic frame is regular, Algebra Univers. {\bf 50} (2003) 231--257.
\bibitem{MZ} J. Mart\'{i}nez, E.R Zen.: Regularity in algebraic frames, J. of Pure and Appl. Alg. {\bf 211} (2007) 566--580
 \bibitem{Mu} D. Mundici.: Advanced \L ukasiewicz calculus and MV-algebras. Trends Log. Stud. Log. Libr. {\bf 35}, Springer, New York (2011)
\bibitem{jbn2} J. B. Nganou.: Stone MV-algebras and strongly complete MV-algebras, \textit{Algebra Univers.}, {\bf 77}(2), (2017)147--161.
  \bibitem{jbn} J. B. Nganou.: Profinite MV-algebras and Multisets, \textit{Order} {\bf 32}(3), (2015)449--459.
  \bibitem{Picado} J. Picado, Pultr, A.: Frames and Locales: Topology without points. Frontiers in Mathematiccs, Springer, Basel, 2012.
  \bibitem{PD} B. A. Davey, H. A. Priestley, Introduction to lattices and order, 2nd Edition, Cambridge University Press, (2002)
  \bibitem{LJMW} L. J. M. Waaijers.: On the structure of lattice ordered groups. Doctoral dissertation, Technical Univ. of Delft Uitgeverij Waltman, Delft, 1968.
  \bibitem{Wright} I. W. Wright.: Divisibility of ordered groups, Proceedings of the Edinburgh Mathematical Society, {\bf 18} (1972)81--83
 
  \end{thebibliography}
\end{document}